\newtheorem{theorem}{Theorem}[section]
\newtheorem{thm}[theorem]{Theorem}
\newtheorem{prop}[theorem]{Proposition}
\newtheorem{lem}[theorem]{Lemma}
\newtheorem{cor}[theorem]{Corollary}
\newtheorem{example}[theorem]{Example}
\newtheorem{conj}[theorem]{Conjecture}
\newtheorem{defn}[theorem]{Definition}
\begin{document}

\title{On high-dimensional acyclic tournaments}

\author{Nati Linial\thanks{School of Computer Science and engineering, The Hebrew University of Jerusalem, Jerusalem 91904, Israel. Email: {\tt nati@cs.huji.ac.il}. Research supported in 
part by the Israel Science Foundation and by a USA-Israel BSF grant.}
\and Avraham Morgenstern\thanks{Einstein Institute of mathematics, The Hebrew University of Jerusalem, Jerusalem 91904, Israel. Email: {\tt avraham.morgenstern@mail.huji.ac.il}}
}


\maketitle

\setcounter{page}{1}

\vspace{-2em}

\begin{abstract}
We study a high-dimensional analog for the notion of an acyclic (aka transitive) tournament. We give upper and lower bounds on the number of $d$-dimensional $n$-vertex acyclic tournaments. In addition, we prove that every $n$-vertex $d$-dimensional tournament contains an acyclic subtournament of $\Omega(\log^{1/d}n)$ vertices and the bound is tight. This statement for tournaments (i.e., the case $d=1$) is a well-known fact. We indicate a connection between acyclic high-dimensional tournaments and Ramsey numbers of hypergraphs. We investigate as well the inter-relations among various other notions of acyclicity in high-dimensional tournaments. These include combinatorial, geometric and topological concepts.
\end{abstract}

\section{Introduction}

A tournament is an orientation of a complete graph. The study of
tournaments is a classical topic in combinatorics. Already in the
1960's a whole monograph~\cite{moon} was dedicated to this subject.
Many theorems have been proved about tournaments over the years. Here
we take a geometric perspective of the subject and view a tournament
as an orientation of the one-dimensional skeleton of a simplex.
As it turns out, higher-dimensional analogs where we orient the
higher skeletons of the simplex are rich in structure and raise many
intriguing problems. To make the distinction clear, we often refer henceforth
to traditional tournaments as $1$-tournaments and to their $d$-dimensional
counterparts as $d$-tournaments.

As far as we know, the first paper on higher dimensional
tournaments is due to Leader and Tan~\cite{leader}. It is well-known
and easy to prove that in a $1$-tournament at most one quarter of the
triples are cyclic, and they investigate higher-dimensional
analogs of this statement.

We start with some definitions and background material.
Unless otherwise stated, every tournament that we consider has vertex set $V = [n]=\{1,\ldots,n\}$ with the natural order. Maintaining the topological terminology,
we refer to a subset $A \subseteq V$ as a {\em face} of {\em dimension} $|A|-1$. A face of dimension $d$ is called a $d$-face for short, or even just a face when the relevant dimension is clear from the context. A $d$-tournament $T = (V, \epsilon)$ on vertex set $V$ is specified by a mapping $\epsilon: {V\choose d+1} \to \{-1,1\}$. For a $d$-face $\sigma\in {V\choose d+1}$ we call $\epsilon(\sigma)$ the {\em orientation} of $\sigma$. We mostly write $\epsilon_{\sigma}$ rather than $\epsilon(\sigma)$.

For faces $\tau\subset\sigma$ of dimension $d-1,d$, respectively, define $(\tau;\sigma)$ as the orientation {\em induced} on $\tau$ by the positive orientation of $\sigma$ (viewed as a $d$-dimensional simplex). Namely, let $\sigma={i_0<i_1<\ldots<i_d}$, $\tau=\sigma\setminus\{i_j\}$, then $(\tau;\sigma)=(-1)^{d-j}$. Now if $\sigma$ is oriented, with orientation $\epsilon_\sigma$, the orientation induced on $\tau$ is $\epsilon_\sigma\cdot(\tau;\sigma)$. If $\tau\not\subset\sigma$ we define $(\tau;\sigma)$ to be zero.

The {\em incidence matrix} of a $d$-tournament $T$ is an ${n \choose d} \times {n \choose d+1}$ matrix $A$ whose rows
and columns correspond to all subsets of $V$ of cardinality $d$ resp. $d+1$.
The $(\tau, \sigma)$ entry of $A$ is the orientation induced from $\sigma$ to $\tau$ (and zero if $\tau \not \subset \sigma$). Clearly, $T = (V, \epsilon)$ can be read off the incidence matrix $A$. We often do not distinguish between a face and the corresponding (column) vector of the incidence matrix. Note that for $d=1$ these definitions yield the traditional definitions of a tournament and its incidence matrix.

In order to deal with {\em partial tournaments} we allow $\epsilon$ to take the value $0$ as well. In that case, if $\epsilon_{\sigma}=0$, the $\sigma$-column of the incidence matrix is an all-$0$ column. 

\begin{figure}
 \centering
 \includegraphics[width=100mm,height=60mm]{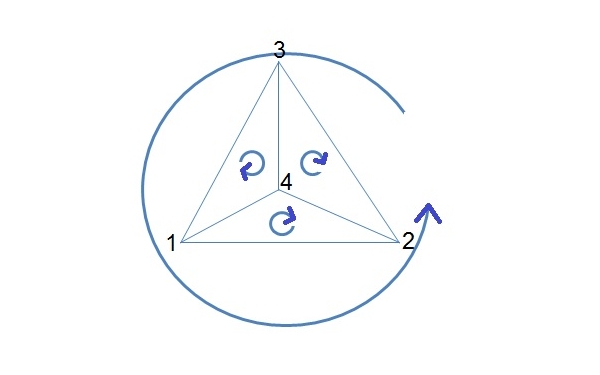} 
 \caption{A $2$-tournament on four vertices}
 \label{fig:1}
\end{figure}

\begin{example}\label{ex1}
Consider the $2$-dimensional tournament $T=([4],(\epsilon_{123},\epsilon_{124},\epsilon_{134},\epsilon_{234}))=$ \newline $([4],(1,-1,1,-1))$. Figure \ref{fig:1} demonstrates the orientation of each of the four faces. The corresponding incidence matrix is 
$$\bordermatrix{
~ & 123 & 124 & 134 & 234 \cr
12 & 1 & -1 & 0 & 0 \cr
13 & -1 & 0 & 1 & 0 \cr
14 & 0 & 1 & -1 & 0 \cr
23 & 1 & 0 & 0 & -1 \cr
24 & 0 & -1 & 0 & 1 \cr
34 & 0 & 0 & 1 & -1 \cr
}$$
\end{example}

Let $T=(V,\epsilon)$ be a $d$-tournament with incidence
matrix $A$ and let $x \in V$. The {\em link} of $x$ in $T$
denoted $lk_T(x)$ is a $(d-1)$-tournament on vertex set $V \setminus \{x\}$. It assigns to a $(d-1)$-face $\tau$ the orientation that is induced on $\tau$ by $\tau\cup\{x\}$ in $T$, namely, $\epsilon_{\tau\cup\{x\}}\cdot(\tau;\tau\cup\{x\})$.

The {\em degree sequence} of the tournament is the vector $A\cdot \vec{1}$ where $\vec{1}$ is the vector of $1$'s of length ${n\choose d+1}$. (Note that this definition deviates a little from the standard $1$-dimensional definition. When $d=1$, the $x$-entry of this vector is $s^{+}(x)-s^{-}(x)$, where $s^{\pm}(x)$ is the number of outgoing/incoming edges for vertex $x$). The degree sequence is a sequence of integers in the range $[-(n-d),n-d]$, all of which have the same parity as $n-d$.

A non-empty collection $C$ of faces in a $d$-tournament $T$ is called a {\em cycle} if there is a real vector $v$ with nonnegative entries such that $Av=0$, whose support (i.e., the index set of the positive coordinates in $v$) coincides with $C$. In other words there are {\em positive} real number $v_F$ for every $F\in C$ s.t. 
\begin{equation}\label{def:cycle}
\sum_{F\in C}v_F\cdot F=0.
\end{equation} 
where we identify a face with the corresponding column of the incidence matrix. E.g. The set $\{123, 124, 134, 234\}$ is a cycle in the tournament $T$ of Example \ref{ex1} (all coefficients equal $1$). 

{\em Transitive} (={\em acyclic}) $1$-tournaments and subtournaments are thoroughly studied, and
here is a $d$-dimensional counterpart of this notion:
\begin{defn} A tournament $T$ is {\em acyclic} or {\em cycle-free} if it contains no cycles. \end{defn}

Clearly, there are exactly $n!$ acyclic $1$-tournaments on $n$ vertices.
In Section~\ref{sec:enum} we study the number of $n$-vertex acyclic $d$-tournaments and show (Theorem~\ref{thm1}) that it is $n^{\Theta(n^d)}$. The proof(s) involve both analytic and geometric ideas. In particular, it is easy to tell from the degree vector of a $1$-tournament whether or not the tournament is acyclic. As we show (Lemma~\ref{lem:deg_seq}) it is possible to decide whether a $d$-tournament is acyclic by observing its degree sequence.

Let $T$ a partial $d$-tournament. Using a term from the topology of simplicial complexes, a $(d-1)$-face $F$ is called {\em free} if all the $d$-faces that contain it induce the same orientation on $F$. (I.e., the corresponding row in $T$'s incidence matrix is either non-negative or non-positive). An {\em elementary collapse} is a step in which we pick a free $(d-1)$-face $F$ and remove from $T$ all the $d$-faces containing it, i.e., we set $\epsilon_G=0$ for all $G\supset F$. We call $T$ {\em collapsible} if it is possible to arrive at $\epsilon=\vec{0}$ in a series of elementary collapses. Note, e.g., that the $2$-tournament of Example \ref{ex1} has no free faces and is, therefore, not collapsible. In contrast,

\begin{example}

The incidence matrix of the $2$-tournament $T=([4],(1,1,1,1))$ is 
$$\bordermatrix{
~ & 123 & 124 & 134 & 234 \cr
12 & 1 & 1 & 0 & 0 \cr
13 & -1 & 0 & 1 & 0 \cr
14 & 0 & -1 & -1 & 0 \cr
23 & 1 & 0 & 0 & 1 \cr
24 & 0 & 1 & 0 & -1 \cr
34 & 0 & 0 & 1 & 1 \cr
}$$
It is easily verified that $T$ is collapsible. Note, e.g., that the face $13$ is not free, but becomes free once we collapse, e.g., the face $12$.

\end{example}

It is easy to see that a collapsible tournament must be acyclic. No subface of a face that participates in a cycle can be free. This remains so even following any sequence of elementary collapses.

Let us recall the following well-known fact about $1$-tournaments \cite{er:moser}:

\begin{thm} Every $1$-tournament on $n$ vertices has an acyclic subtournament on $\log_2 n$ vertices. There exist $1$-tournaments with no acyclic subtournament on $(2+o(1))\log_2 n$ vertices. This, in particular,
holds for random $1$-tournaments.\end{thm}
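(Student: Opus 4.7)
The plan is to prove the two halves by independent classical arguments: a greedy iterated-majority construction for the existence of a large acyclic subtournament, and a first-moment probabilistic bound for the upper bound achieved by a random tournament.

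For the existence of an acyclic subtournament on $\log_2 n$ vertices, I would run the following process. Set $V_0 = [n]$ and pick any $v_1 \in V_0$. Its $|V_0|-1$ incident edges partition $V_0 \setminus \{v_1\}$ into an out-neighborhood and an in-neighborhood, at least one of which has size $\lceil (|V_0|-1)/2 \rceil$. Let $V_1$ be that larger set, so that $v_1$ has a uniform relation (either all outgoing or all incoming) to every vertex of $V_1$. Iterate inside $V_1$ to produce $v_2 \in V_1$ and $V_2 \subseteq V_1 \setminus \{v_2\}$ of size at least $\lceil(|V_1|-1)/2\rceil$, and so on. After $k$ steps one has $|V_k| \geq (n-1)/2^k$, so the process continues as long as $k \leq \log_2 n$. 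The sequence $v_1,\ldots,v_k$ forms an acyclic subtournament: for every $i<j$ the edge between $v_i$ and $v_j$ was committed at step $i$ to a prescribed direction, so the ordering is transitive, and a transitive $1$-tournament is evidently cycle-free.

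For the upper bound let $T$ be a uniformly random $1$-tournament on $[n]$, each edge independently oriented with equal probability. The number of acyclic $1$-tournaments on a fixed $k$-element vertex set equals $k!$ (one per linear order), each realized with probability $2^{-\binom{k}{2}}$. Hence the expected number of acyclic $k$-subtournaments is
\[
\binom{n}{k}\, k!\, 2^{-\binom{k}{2}} \ \leq\ n^{k}\, 2^{-\binom{k}{2}} \ =\ 2^{\,k\log_2 n - \binom{k}{2}}.
\]
For $k \geq (2+\varepsilon)\log_2 n$ the exponent is $-\Theta(\log^2 n)$, so the expectation tends to $0$. By Markov's inequality, a random tournament almost surely has no acyclic subtournament on $(2+o(1))\log_2 n$ vertices.

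Both arguments are classical and routine; I expect no real obstacle. The only small care is to verify that the greedy construction yields a \emph{transitively} ordered subset rather than merely a consistent path, which is automatic from the way each $v_i$'s relation to all subsequent vertices was fixed. The probabilistic half is a standard first-moment calculation, and the constant $2+o(1)$ emerges directly from comparing the linear term $k\log_2 n$ with $\binom{k}{2}\sim k^2/2$ in the exponent.
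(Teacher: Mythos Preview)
Your argument is correct and is exactly the classical one. The paper itself does not prove this statement --- it is quoted as a well-known fact with a citation to Erd\H{o}s--Moser --- so there is no ``paper's proof'' to compare against directly. Your greedy halving and first-moment calculation are precisely the $d=1$ specialisation of the argument the paper later gives for its $d$-dimensional generalisation (Theorem~\ref{sub_acyc}). Two minor remarks: the inequality $|V_k|\ge (n-1)/2^k$ is not literally what the recursion $|V_{k+1}|\ge \lceil(|V_k|-1)/2\rceil$ gives, though the correct bound still yields $\lfloor\log_2 n\rfloor+1$ vertices; and your sentence ``so the ordering is transitive'' should be read as ``the induced subtournament is transitive'' rather than that $v_1,\ldots,v_k$ is itself the transitive order, since the majority side can alternate between in- and out-neighbourhoods --- but you address this correctly in your final paragraph.
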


In Section~\ref{sec:sub} we derive a $d$-dimensional analog of this theorem. We show that every $n$-vertex $d$-tournament has an acyclic subtournament on $\Omega({\log^{1/d} n})$
vertices and the bound is tight.

There are several simple conditions on $1$-tournaments which are all equivalent to ``transitivity''. Namely, (i) $T$ contains no cyclic triangles, (ii) $T$ contains no (graph-theoretical) directed cycle, (iii) $T$ is acyclic as defined above, (iv) $T$ is collapsible, and finally, (v) all edges of $T$ go forward relative to some total order on the vertices. In other words, the vertices can be mapped to $\mathbb R$ with all edges going from left to right.

In Section~\ref{sec:equiv} we study the implications among these notions in high-dimensional tournaments and we observe that for $d>1$, the implications (v)$\implies$(iv)$\implies$(iii)$\implies$(ii)$\implies$(i) hold. We construct examples which show that all the reverse implications do not hold. This paper raises many open questions, and in Section~\ref{sec:final} we describe a few additional directions for further research. 

\subsection{Hyperplane Arrangements}

A hyperplane arrangement $\mathcal A$ in $\mathbb{R}^n$, or simply an {\em arrangement} is a set of hyperplanes in $\mathbb{R}^n$. A {\em chamber} of $\mathcal A$ is a 
connected component of $\mathbb{R}^n\setminus(\cup_{H \in \mathcal A} H)$. The {\em braid arrangement} is a famous example which is of relevance to us. Its hyperplanes are $H_{ij} = \{x \in\mathbb{R}^n ~|~ x_i-x_j=0\}$ for $1\le i<j\le n$. Its relevance to our discussion comes from the simple bijection between the chambers of the braid arrangement and permutations in $S_n$, or, what is the same,
acyclic $n$-vertex $1$-tournaments. For
a comprehensive survey of arrangements, see \cite{stanley}.

As we observe below, there is a natural bijection between $n$-vertex acyclic $d$-tournaments and the chambers of a certain arrangement in $\mathbb{R}^{n\choose{d}}$. This arrangement may be of independent interest for other reasons as well, as we explain in Section~\ref{sec:final}.

\section{Enumerating acyclic tournaments}\label{sec:enum}

We denote by $a_d(n)$ the number of acyclic $n$-vertex $d$-tournaments.

\begin{thm}\label{thm1}
For every integer $d\ge 1$ and every large enough $n$ there holds
\[
a_d(n) \le \left(\frac{e}{d+1}n\right)^{{n \choose d}}.
\]
Also
\[
a_d(n) \ge\left(\frac n{e^{H_d}+o_n(1)}\right)^{n\choose d}
\]
where  $H_d$ is the harmonic sum $H_d=\sum\limits_{k=1}^d \frac{1}{k}$. In particular, for large $d$, 
\[a_d(n)\ge \left(\frac{e^{-\gamma}+o_d(1)}{d}\cdot n\right)^{n\choose d}\] 
where $\gamma=0.577\ldots$ is the Euler constant.
\end{thm}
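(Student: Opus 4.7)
Both bounds will follow from viewing acyclic $n$-vertex $d$-tournaments as chambers of a central hyperplane arrangement. Let $A_0$ denote the canonical incidence matrix whose entries are $(A_0)_{\tau,\sigma}=(\tau;\sigma)$, so that the incidence matrix of a tournament $\epsilon$ is $A=A_0\cdot\mathrm{diag}(\epsilon)$. Acyclicity of $\epsilon$ amounts to the columns of $A$ admitting no nonnegative nontrivial linear dependency, and by Stiemke's theorem of the alternative this is equivalent to the existence of $y\in\mathbb{R}^{\binom{n}{d}}$ with $\epsilon_\sigma\cdot(A_0^Ty)_\sigma>0$ for every $\sigma\in\binom{[n]}{d+1}$. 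Thus $a_d(n)$ equals the number of open chambers of the central arrangement $\mathcal{A}$ of the $\binom{n}{d+1}$ hyperplanes $H_\sigma=\{y:(A_0^Ty)_\sigma=0\}$ in $\mathbb{R}^{\binom{n}{d}}$.

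For the upper bound, I would invoke the Zaslavsky/Buck chamber-counting bound: an arrangement of $k$ hyperplanes in $\mathbb{R}^m$ has at most $\sum_{i=0}^m\binom{k}{i}\le(ek/m)^m\cdot(1+o(1))$ chambers when $m\le k/2$. Plugging in $m=\binom{n}{d}$ and $k=\binom{n}{d+1}$, so that $k/m=(n-d)/(d+1)\le n/(d+1)$, yields $a_d(n)\le(en/(d+1))^{\binom{n}{d}}$ as claimed.

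For the lower bound I would use a lattice-point strategy. Assign to each $\tau\in\binom{[n]}{d}$ a value $y_\tau\in\{1,\ldots,n\}$, yielding $n^{\binom{n}{d}}$ such assignments; a generic assignment determines an acyclic tournament via $\epsilon(y)_\sigma=\mathrm{sign}((A_0^Ty)_\sigma)$, while a small perturbation disposes of the lattice points that accidentally lie on some $H_\sigma$. It then suffices to show that each chamber of $\mathcal{A}$ contains at most $e^{H_d\binom{n}{d}}(1+o(1))$ of these lattice points. This is the technical heart of the argument; my plan is a permanent-style estimate in the spirit of Br\'egman--Minc, organized by the rank of each coordinate inside the $d$-face $\tau=\{i_1<\ldots<i_d\}$: layer $k$ (handling the rank-$k$ coordinate) should contribute an overcounting factor of $e^{1/k}$, and these multiply to $\prod_{k=1}^d e^{1/k}=e^{H_d}$.

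The main obstacle is clearly the lower bound. The upper bound is essentially routine once the Stiemke bijection with chambers is in place, whereas the lower bound requires a tight combinatorial estimate specific to the incidence structure of $\mathcal{A}$ --- the harmonic-sum signature in the bound strongly hints at a permanent-based or layered recursive argument as sketched above. An alternative route that I would also try is to compute the characteristic polynomial of $\mathcal{A}$ via its intersection lattice and appeal directly to Zaslavsky's formula, though this looks harder to control.
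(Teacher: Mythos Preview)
Your upper bound argument is essentially the paper's: establish via LP duality (the paper phrases it directly rather than invoking Stiemke by name) the bijection between acyclic $d$-tournaments and chambers of the arrangement $\mathcal{A}$, then apply the standard chamber-count bound $\sum_{i\le m}\binom{k}{i}$ with $m=\binom{n}{d}$, $k=\binom{n}{d+1}$.

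Your lower bound plan, however, diverges from the paper and has a genuine gap. The lattice-point idea reduces the problem to showing that \emph{every} chamber of $\mathcal{A}$ contains at most $(e^{H_d}+o(1))^{\binom{n}{d}}$ points of $\{1,\ldots,n\}^{\binom{n}{d}}$. For $d=1$ this is automatic by symmetry (all braid chambers are congruent, so each gets exactly $n^n/n!\sim e^n$ points), but for $d\ge2$ the chambers of $\mathcal{A}$ are \emph{not} all congruent, and you give no real mechanism for bounding the largest one. The ``Br\'egman--Minc, layer $k$ contributes $e^{1/k}$'' sketch is suggestive but does not constitute an argument: Br\'egman bounds permanents of $0/1$ matrices via row sums, and there is no evident permanent here, nor a clear way to isolate a ``rank-$k$ layer'' whose interaction with the chamber inequalities $(A_0^Ty)_\sigma>0$ yields exactly a factor $e^{1/k}$. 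Without this step the lower bound is unproved.

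The paper takes a completely different and much more concrete route for the lower bound. It proves the recursion
\[
a_d(n)\ \ge\ \prod_{k=d-1}^{n-1}a_{d-1}(k)
\]
by showing (Proposition~\ref{prop1}) that if, for every vertex $i$, the restriction of $lk_T(i)$ to $\{i+1,\ldots,n\}$ is acyclic, then $T$ itself is acyclic; this is a short linear-algebra argument on the block structure of the incidence matrix. One then builds acyclic $d$-tournaments by freely choosing these restricted links. Unwinding the recursion down to $a_1(k)=k!$ gives $a_d(n)\ge\prod_{k=1}^{n-d+1}k^{\binom{n-k}{d-1}}$, and evaluating the logarithm via the integral $\frac{1}{(d-1)!}\int_1^n(n-x)^{d-1}\log x\,dx$ produces the constant $e^{H_d}$ through the identity $\sum_{r=0}^{d-1}\binom{d-1}{r}\frac{(-1)^r}{(r+1)^2}=\frac{H_d}{d}$. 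This link-based recursion is the key idea your proposal is missing; it is also what explains the harmonic-number signature, not a permanent bound.
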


\begin{proof}\let\qed\relax

We start with the lower bound which is a consequence of the following inequality on $a_d(n)$. This inequality ties between the cycles of a tournament, and the cycles of its (vertex) links.

\begin{equation}\label{recursion}
\forall~n\ge d\ge 2 \;\;\;\;\;a_d(n)\geq\prod\limits_{k=d-1}^{n-1}a_{d-1}(k).
\end{equation}

Before we prove Inequality~(\ref{recursion}), we use it to derive the lower bound in Theorem~\ref{thm1}. For $d=1$ the lower bound follows from Stirling's formula, since $a_1(n)=n!=\prod_1^n k$.
We proceed to larger $d$'s. The inequality implies that $a_2(n)\ge \prod\limits_{k=1}^{n-1} k^{n-k}$, and more generally, by induction, that
\[a_{d}(n)\ge \prod\limits_{k=1}^{n-d+1} k^{n-k \choose d-1},\] 
In particular,
\[a_d(n)\ge n^{-O(n^{d-1})}\prod\limits_{k=1}^{n} k^{n-k \choose d-1}.\]
By sweeping more of the error terms into the expression $n^{O(n^{d-1})}$, we can further write
\[
a_d(n)\ge n^{-O(n^{d-1})}\prod\limits_{k=1}^{n} k^{n-k+d-2 \choose d-1}\ge 
n^{-O(n^{d-1})}\prod\limits_{k=1}^{n} k^{(n-k)^{d-1}/(d-1)!}.
\]
(In the first inequality we gave up a factor of $\prod_1^n k^{{n-k+d-2 \choose d-1}-{n-k \choose d-1}} \le \prod_1^n k^{O(n^{d-2})} \le n^{O(n^{d-1})}$.)

Consequently,
\begin{align*}
\log(a_d(n)) &\ge \sum_1^n \frac{(n-k)^{d-1}}{(d-1)!}\log k -O(n^{d-1} \log n) \\
&\ge{\frac{1}{(d-1)!}\int\limits_{1}^{n} (n-x)^{d-1}\log x~dx} -O(n^{d-1} \log n).\\
\end{align*}

The integral estimate for the sum follows from the fact that the error term is as large as the maximum of the function over the range of integration.

Using the binomial formula,
\[
\int (n-x)^{d-1}\log x~dx = \sum\limits_{r=0}^{d-1} {d-1 \choose r} (-1)^r n^{d-1-r} x^{r+1} (\frac{\log x}{r+1}-\frac{1}{(r+1)^2}).
\] 
This gives
\[\frac{\log(a_d(n))}{{n\choose d}}\ge d\sum\limits_{r=0}^{d-1} {d-1 \choose r} (-1)^r (\frac{\log n}{r+1}-\frac{1}{(r+1)^2}) - O(\frac{\log n}{n}).
\]

It only remains to verify the simple identities

\[\sum\limits_{r=0}^{d-1} {d-1 \choose r} (-1)^r \frac{1}{r+1}=\frac{1}{d}\]

and 

\[\sum\limits_{r=0}^{d-1} {d-1 \choose r} (-1)^r \frac{1}{(r+1)^2} = \frac{1}{d}(1+\frac{1}{2}+\frac{1}{3}+\ldots+\frac{1}{d}).\]\end{proof}

We now turn to prove Inequality~(\ref{recursion}).

\begin{prop}\label{prop1} Let $T$ be a $d$-tournament. Suppose that for every vertex $n \ge i\ge 1$ the $(d-1)$-tournament $lk_T(i)$ is acyclic, then $T$ is acyclic.
Moreover, the same conclusion holds even if we only assume that the
restriction of the link $lk_T(i)$ to $\{i+1,i+2,\ldots,n\}$ is acyclic. \end{prop}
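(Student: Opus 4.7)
The plan is to argue by contradiction. Suppose $T$ has a cycle $C$ certified by positive reals $\{v_F\}_{F\in C}$ with $\sum_{F\in C} v_F \cdot F = 0$, viewing each $F$ as the corresponding column of the incidence matrix. Let $i$ be the smallest vertex appearing in any face of $C$. By minimality, every $F\in C$ satisfies $\min(F)\ge i$, and in particular every $F\in C$ containing $i$ can be written as $F=\{i\}\cup\sigma$ with $\sigma\subset\{i+1,\ldots,n\}$ of size $d$.

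The guiding idea is that the rows of the cycle equation indexed by $(d-1)$-faces containing $i$ translate, after sign adjustments, into the cycle equation of $lk_T(i)$ restricted to $\{i+1,\ldots,n\}$, which would contradict the hypothesis. I would therefore define
\[
C' := \{F\setminus\{i\} : F\in C,\ i\in F\}, \qquad v'_\sigma := v_{\{i\}\cup\sigma}.
\]
By the choice of $i$, the set $C'$ is nonempty (since $i$ appears in at least one face of $C$) and is contained in ${\{i+1,\ldots,n\}\choose d}$.

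I would then fix an arbitrary $(d-2)$-face $\bar{\tau}\subset\{i+1,\ldots,n\}$ and read off the row of $\sum_{F\in C} v_F\cdot F=0$ corresponding to the $(d-1)$-face $\tau:=\{i\}\cup\bar{\tau}$:
\[
\sum_{F\in C,\,\tau\subset F} v_F\cdot\epsilon_F\cdot(\tau;F)=0.
\]
Every $F$ appearing in this sum contains $i$, so $F=\{i\}\cup\sigma$ for some $\sigma\in C'$ with $\bar{\tau}\subset\sigma$. Substituting $v'_\sigma$ and using the link's definition $\epsilon^{lk}_\sigma=\epsilon_{\{i\}\cup\sigma}\cdot(\sigma;\sigma\cup\{i\})$, the row should transform into
\[
\sum_{\sigma\in C',\,\bar{\tau}\subset\sigma} v'_\sigma\cdot\epsilon^{lk}_\sigma\cdot(\bar{\tau};\sigma)=0,
\]
which is exactly the $\bar{\tau}$-row of the cycle equation for $C'$ in the link. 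Since this holds for every such $\bar{\tau}$, the set $C'$ is a cycle in $lk_T(i)$ restricted to $\{i+1,\ldots,n\}$, contradicting the hypothesis.

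The only finicky step is the sign bookkeeping between the two displays. Writing $\sigma=\{j_1<\cdots<j_d\}$ with all $j_l>i$, inserting $i$ at position $0$ of $\{i\}\cup\sigma$ shows that $(\{i\}\cup\bar{\tau};\{i\}\cup\sigma)=(\bar{\tau};\sigma)$ (both equal $(-1)^{d-s}$, where $s$ is the position of the removed vertex), while the factor $(\sigma;\sigma\cup\{i\})=(-1)^d$ is the same constant in every term and can be pulled outside the sum. I expect this sign identity to be the only routine calculation; the structural argument around it is immediate once set up.
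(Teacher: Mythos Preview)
Your proof is correct and follows the same route as the paper: pick the minimal vertex $i$ occurring in the cycle, and project the faces through $i$ down to a cycle in $lk_T(i)|_{\{i+1,\ldots,n\}}$. The only cosmetic difference is that the paper packages the sign bookkeeping into a block decomposition of the incidence matrix (observing that the relevant block equals $\pm$ the link's incidence matrix), whereas you verify the identity $(\{i\}\cup\bar\tau;\{i\}\cup\sigma)=(\bar\tau;\sigma)$ and pull out the global factor $(\sigma;\sigma\cup\{i\})=(-1)^d$ explicitly; both arrive at the same conclusion.
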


\begin{proof}
We only state the proof of the second, stronger part of the proposition.
Assume to the contrary that $T$ contains a cycle $C$ and
\begin{equation}\label{eqn:link}
\sum_{F \in C} v_F F =0
\end{equation} with $v_F > 0$ for all $F \in C$ (see Equation~(\ref{def:cycle})). Now let $n\ge k\ge 1$ be the lowest index of a vertex in $\cup_{F\in C}F$. Let $D:=\{F|~k\in F\in C\}$. For $F \in D$ we let $F':=F\setminus\{k\}$, and we claim that $\sum_{F \in D} v_F F' =0$, contrary to our assumption that the
restriction of $lk_T(k)$ to $\{k+1,k+2,\ldots,n\}$ is acyclic. To see this, note that $B$, the incidence matrix of $lk_T(k)|_{\{k+1,\cdots,n\}}$, is, possibly with a global sign reversal, a submatrix of $T$'s incidence matrix $A$. 

We write $A$ in block form as follows

$$\bordermatrix{
~ & k\not\in F & k\in F; \min(F)< k & \min(F)=k \cr
~~~~~~~~~~~~~~~~k\not\in\tau & X_1 & X_2 & X_3\cr
k\in\tau; \min(\tau)< k & 0 & X_4 & X_5 \cr
~~~~~~~~\min(\tau)=k & 0 & X_6 & X_7  \cr
}$$
where the rows are indexed by $(d-1)$-faces $\tau$, and the columns are indexed by $d$-faces $F$. Note that $v$ may be viewed as a vector in the right kernel of $A$. It can be expressed in corresponding block form as $v=(v_1, 0, v_2)$, where $v_2\neq 0$ by definition of $k$. It follows that $v_2$ is in the right kernel of $X_7$ which proves our claim, since $X_7=\pm B$. \end{proof}

We are now ready to complete the proof of Inequality~(\ref{recursion}), by
providing a scheme that yields many acyclic $d$-tournaments $T$ on
vertex set $[n]$. Select first an arbitrary acyclic $(d-1)$-tournament on vertex set $[2,n]$ to be $lk_T(1)$. Then an acyclic $(d-1)$-tournament on vertex set $[3,n]$ to be $lk_T(2)|_{[3,n]}$ etc. By Proposition~\ref{prop1} the resulting $d$-tournament $T$ is indeed acyclic. The desired inequality follows.

This concludes the proof of the lower bound and we now turn to prove the upper bound. We first note that acyclic $d$-tournaments are uniquely determined by a their degree sequence. This simple observation gives an upper bound that is weaker than what is stated in the theorem. We still find it worthwhile to state, since several interesting questions arise in this context.

\begin{thm}\label{easy_ub} \[a_d(n)\le n^{n \choose d}.\]
\end{thm}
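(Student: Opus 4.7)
The plan is to bound $a_d(n)$ by the number of admissible degree sequences, after first showing that each degree sequence arises from at most one acyclic tournament. So the two pieces are (a) \emph{uniqueness}: an acyclic $d$-tournament is determined by its degree sequence, and (b) \emph{counting}: there are at most $n^{\binom{n}{d}}$ possible degree sequences.

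Piece (b) is immediate from the fact recorded earlier that every coordinate of the degree sequence $A\vec{1}$ is an integer in $[-(n-d),n-d]$ whose parity matches that of $n-d$. This leaves at most $n-d+1\le n$ choices per coordinate, hence at most $n^{\binom{n}{d}}$ possible degree vectors altogether.

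For piece (a), let $T_1=(V,\epsilon_1)$ and $T_2=(V,\epsilon_2)$ be acyclic with $A_1\vec{1}=A_2\vec{1}$, and let $D\subseteq\binom{V}{d+1}$ be the set of $d$-faces on which $\epsilon_1$ and $\epsilon_2$ disagree. The key observation is that flipping the orientation of a single face negates the corresponding column of the incidence matrix; so the $\sigma$-columns of $A_1$ and $A_2$ coincide for $\sigma\notin D$ and are negatives of each other for $\sigma\in D$. Writing $\mathbf{1}_D$ for the $\{0,1\}$-indicator vector of $D$, summing the column differences gives
\[
0 \;=\; A_1\vec{1}-A_2\vec{1} \;=\; 2\,A_1\mathbf{1}_D.
\]
If $D$ were nonempty, $\mathbf{1}_D$ would be a nonnegative vector with support exactly $D$ in the right kernel of $A_1$, exhibiting $D$ as a cycle in $T_1$ and contradicting acyclicity. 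Therefore $D=\emptyset$, i.e.\ $T_1=T_2$. Combining (a) and (b) yields $a_d(n)\le n^{\binom{n}{d}}$.

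There is no real obstacle in this argument; the one conceptual move worth emphasizing is the one-line identity $A_1\vec{1}-A_2\vec{1}=2A_1\mathbf{1}_D$, which instantly converts the hypothesis ``two acyclic tournaments with equal degree sequences'' into ``an explicit cycle supported on the disagreement set,'' forcing that set to be empty.
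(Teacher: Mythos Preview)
Your proof is correct and essentially identical to the paper's: both parts~(a) and~(b) match, and the key identity you highlight, $A_1\vec{1}-A_2\vec{1}=2A_1\mathbf{1}_D$, is exactly the content of the paper's Lemma~\ref{lem:deg_seq} (there written as $Av=\tfrac12(A-B)v=\tfrac12(A-B)\vec{1}=0$ with $v=\mathbf{1}_D$).
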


\begin{proof}
The degree sequence of a $d$-tournament is an $n\choose d$-vector whose entries have the parity of $n-d$ and reside in  $[-(n-d),n-d]$. There are $(n-d+1)^{n\choose d}$ such vectors. The following lemma completes the proof.
\end{proof}

\begin{lem}\label{lem:deg_seq}
An acyclic $d$-tournament is uniquely determined by its degree sequence.
\end{lem}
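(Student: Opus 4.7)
The plan is to realize the degree-sequence map as a fixed linear map applied to the orientation vector, and then exploit acyclicity to force injectivity. Let $\partial$ denote the $\binom{n}{d}\times\binom{n}{d+1}$ signed incidence matrix of the abstract (unoriented) $d$-simplex on $[n]$, so that the $\sigma$-th column $\partial_\sigma$ has entries $(\tau;\sigma)$. For any $d$-tournament $T$ with orientation vector $\epsilon\in\{\pm 1\}^{\binom{n}{d+1}}$, the definition of the incidence matrix gives $A_T=\partial\cdot\mathrm{diag}(\epsilon)$; in particular the degree sequence is $A_T\vec{1}=\partial\epsilon$. So the lemma reduces to showing that the affine restriction of $\partial$ to $\{\pm1\}^{\binom{n}{d+1}}$ is injective on acyclic $\epsilon$.

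Suppose now that $T_1$ and $T_2$ are two acyclic $d$-tournaments with the same degree sequence. Then $\partial(\epsilon_1-\epsilon_2)=0$. Set $u:=\epsilon_1-\epsilon_2$; each entry of $u$ lies in $\{-2,0,2\}$, and on its support $S:=\{\sigma:u_\sigma\neq 0\}$ one has $\epsilon_1(\sigma)=u_\sigma/2$ (indeed, $u_\sigma=2$ forces $\epsilon_1(\sigma)=+1$, and $u_\sigma=-2$ forces $\epsilon_1(\sigma)=-1$). Let $v\in\{0,1\}^{\binom{n}{d+1}}$ be the indicator vector of $S$. Then
$$
A_{T_1}v=\sum_{\sigma\in S}\epsilon_1(\sigma)\,\partial_\sigma=\tfrac{1}{2}\sum_\sigma u_\sigma\,\partial_\sigma=\tfrac{1}{2}\,\partial u=0.
$$
If $S\neq\emptyset$, this displays $S$ as a cycle of $T_1$ (a nonnegative $v\neq 0$ in $\ker A_{T_1}$ whose support is exactly $S$), contradicting the acyclicity of $T_1$. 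Hence $S=\emptyset$, i.e., $\epsilon_1=\epsilon_2$ and $T_1=T_2$.

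The only real content is the factorization $A_T=\partial\cdot\mathrm{diag}(\epsilon)$, which converts the question ``does $\partial\epsilon$ determine $\epsilon$?'' into ``is there a nonzero element of $\ker\partial$ whose sign pattern is compatible (in the $\pm2,0$ sense above) with an acyclic $\epsilon$?''; the definition of acyclicity forbids exactly such configurations. Note, as a bonus, that the argument actually shows the stronger statement that a $d$-tournament is determined by its degree sequence as soon as \emph{one} tournament with that degree sequence is acyclic, since the derived cycle lives in $T_1$ alone.
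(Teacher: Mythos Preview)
Your proof is correct and is essentially the same argument as the paper's: both take $v$ to be the $0/1$ indicator of the faces where the two orientations differ and verify $A_{T_1}v=0$ via the identity $A_{T_1}v=\tfrac12(A_{T_1}-A_{T_2})\vec{1}$ (which your factorization $A_T=\partial\cdot\mathrm{diag}(\epsilon)$ makes transparent). Your closing remark that only one of the two tournaments need be acyclic is a nice explicit bonus, though it is already implicit in the paper's proof as well.
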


\begin{proof}
Let $T$ and $S$ be two acyclic $d$-tournaments with respective incidence matrices $A,B$ s.t. $A\cdot\vec{1}=B\cdot\vec{1}$. Define a vector $v\in\mathbb{R}^{n\choose d+1}$ as follows
\[
v_c=\begin{cases}
0,\;\;\;\; A_{*,c}=B_{*,c}\\
1,\;\;\;\; A_{*,c}=-B_{*,c}
\end{cases}
\]
 Then $A\cdot v= \frac{1}{2}(A-B)\cdot v =\frac{1}{2} (A-B)\cdot \vec{1}=0$, so that either $A=B$ or $v \neq 0$ and we found a cycle in $T$. 
\end{proof}

This discussion raises several interesting questions concerning degree sequences. In particular we can ask
\begin{itemize}
\item
How many distinct degree sequences there are to $d$-dimensional $n$-vertex tournaments? For $d=1$ quite a lot is known~\cite{kleitman:winston}. It would also be interesting to get some characterizations, efficient ways to recognize such sequences etc.
\item
Of course, all acyclic $n$-vertex $1$-tournaments have the same degree sequence, up to permutation. It seems quite intriguing to understand the degree sequences of acyclic $d$-dimensional tournaments for $d > 1$.

\end{itemize}

\subsection{An improved upper bound using arrangements}\label{sub:arr}

In the same way that acyclic $1$-tournaments are related to the braid arrangement, there are higher-dimensional counterparts to this arrangement that correspond to $d$-dimensional acyclic tournaments. The arrangement in question is ${n\choose d}$-dimensional and has one hyperplane for each $d$-face.
The hyperplane $H_\sigma$ corresponding to the $d$-face $\sigma=\{i_0<\ldots<i_d\}$ is defined by the equation
$$\sum_{\tau} (\tau;\sigma)x_{\tau}=\sum\limits_{k=0}^{d} (-1)^{d-k} x_{\sigma \setminus \{i_k\}}=0.$$
(where the coordinates of the vectors in $\mathbb{R}^{n\choose d}$ are indexed by ${[n] \choose d}$).

There is a natural bijection between chambers of this arrangement and acyclic $d$-tournaments on vertex set $[n]$: Corresponding to a chamber $C$ of the arrangement is the tournament that orients the face $\sigma={i_0<\ldots<i_d}$ according to the rule \[\epsilon_\sigma=\text{sgn}(\sum\limits_{k=0}^{d} (-1)^k x_{\sigma\setminus\{i_k\}}),\] where $x$ is an arbitrary point in $C$. It is easy to see that the orientation does not depend on the choice of $x \in C$.

In the opposite direction, we want to associate a chamber $C$ to a given acyclic tournament $T=([n],\epsilon)$. Equivalently, it suffices to specify a point $x \in C$. This means that we must show the consistency of the following system of inequalities
\[f_{\sigma}(x):=\epsilon_\sigma\sum\limits_{k=0}^{d} (-1)^k x_{\sigma\setminus\{i_k\}}>0\mbox{ for every $d$-face~~}\sigma=\{i_0<\ldots<i_d\}.\] 
By linear programming duality this system is inconsistent iff there exists a nonnegative linear combination of the $f_{\sigma}$ that is identically zero. Namely, there exist $\alpha_\sigma\geq0$ not all zero, s.t. $\sum_{\sigma}\alpha_{\sigma} f_{\sigma} = 0$. But such $\alpha_{\sigma}$ constitute the coefficients of a cycle in $T$.

We next recall the well-known fact (e.g.,~\cite{matousek}) that an $n$-dimensional hyperplane arrangement with $m$ hyperplanes has at most $\sum\limits_{k=0}^n {m\choose k}$ chambers. We can now complete the proof of the upper bound in Theorem \ref{thm1}.
For $n$ large enough,
\[
a_d(n) \le \sum\limits_{k=0}^{n\choose d} {{n\choose d+1}\choose k}\le 2^{{n\choose d+1}H({n\choose d}/{n\choose d+1})} = 2^{\frac{n-d}{d+1}H(\frac{d+1}{n-d}){n\choose d}}\]
It is not hard to verify that for $1 \ge x \ge 0$ the binary entropy function satisfies $H(x)\le x\cdot\log_2(\frac{e}{x})$, which yields
\[a_d(n)\le \left(\frac{n-d}{d+1}\cdot e\right)^{n\choose d}.\]

The claim follows.\qed

\section{Large acyclic subtournaments}\label{sec:sub}

\begin{thm}\label{sub_acyc} Every $d$-tournament on $n$ vertices has an acyclic subtournament on $\Omega(\log^\frac{1}{d}(n))$ vertices. This bound is tight up to a constant factor, and is attained, in particular, by random $d$-tournaments.
\end{thm}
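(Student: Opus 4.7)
My plan has two ingredients, paralleling the two halves of the statement.

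\textbf{Tightness via random $T$.} This is a first-moment calculation. For a uniformly random $d$-tournament $T$ on $[n]$, the expected number of $m$-subsets $S\subseteq[n]$ with $T|_S$ acyclic equals $\binom{n}{m}\cdot a_d(m)/2^{\binom{m}{d+1}}$. Using the upper bound $a_d(m)\le(em/(d+1))^{\binom{m}{d}}$ from Theorem~\ref{thm1}, this expectation is $o(1)$ once $m\ge C(\log n)^{1/d}$ for a sufficiently large $C=C(d)$: the dominant logarithmic term $-\binom{m}{d+1}\log 2\sim -m^{d+1}/(d+1)!$ eventually overpowers the positive contribution $m\log(n/m)+O(m^d\log m)$. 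Hence with positive probability $T$ has no acyclic subtournament of size exceeding $C(\log n)^{1/d}$, giving the tight upper bound.

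\textbf{Lower bound via Proposition~\ref{prop1}.} I would induct on $d$, the base case $d=1$ being the classical $\log_2 n$ bound of Erd\H{o}s and Moser. For the inductive step, Proposition~\ref{prop1} reduces matters to exhibiting an ordered sequence $v_1<v_2<\cdots<v_m$ of vertices such that each restricted link $lk_T(v_i)|_{\{v_{i+1},\ldots,v_m\}}$ is an acyclic $(d-1)$-tournament; the sub-$d$-tournament on $\{v_1,\ldots,v_m\}$ is then automatically acyclic. The natural construction is iterative, maintaining a nested chain $V_0=[n]\supseteq V_1\supseteq\cdots\supseteq V_m$: at round $i$, pick $v_i\in V_{i-1}$ and apply the inductive hypothesis to the $(d-1)$-tournament $lk_T(v_i)|_{V_{i-1}\setminus\{v_i\}}$ to obtain $V_i\subseteq V_{i-1}\setminus\{v_i\}$ on which that link restricts to an acyclic $(d-1)$-tournament.

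\textbf{The main obstacle.} Applied naively, this iteration shrinks $|V_i|$ like $(\log|V_{i-1}|)^{1/(d-1)}$ per round and therefore sustains only $O(\log^*n)$ rounds, which is far short of the claimed $(\log n)^{1/d}$. The technical crux is to slow the shrinkage to at most polynomial per round, say $|V_i|\ge|V_{i-1}|^{\Omega(1)}$, which would iterate the required $(\log n)^{1/d}$ times. My plan is to argue non-greedily: the lower bound $a_{d-1}(k)\ge(k/e^{H_{d-1}})^{\binom{k}{d-1}}$ from Theorem~\ref{thm1} shows that each vertex $v$ in fact carries a huge \emph{family} of acyclic sublinks of $lk_T(v)|_{V_{i-1}\setminus\{v\}}$, and a pigeonhole/averaging argument across these families across varying $v$ should produce a choice of $v_i$ together with a $V_i$ of polynomially-large size. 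Making this amortization precise --- so that the recursion compounds into $(\log n)^{1/d}$ rounds and matches the random upper bound without logarithmic loss --- is the heart of the argument.
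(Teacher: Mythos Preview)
Your tightness argument is correct and is exactly what the paper does.

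Your lower-bound plan, however, has a genuine gap --- one you yourself flag but do not close. Inducting on $d$ via Proposition~\ref{prop1} and shrinking $V_{i-1}$ to an acyclic sublink of a single vertex really does cost you an iterated logarithm per round, and the ``huge family of acyclic sublinks plus pigeonhole'' repair is not an argument yet: you have not specified over what you are averaging, nor why the outcome is a single set $V_i$ of polynomial size on which \emph{the chosen} link is acyclic. I do not see how to make this amortization work, and the paper does not attempt anything of the sort.

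The paper's proof sidesteps the obstacle entirely with a direct halving argument that generalizes the Erd\H{o}s--Moser proof without any induction on $d$. Run through the $(d-1)$-faces $\tau$ in reverse lexicographic order. For the current $\tau$, the surviving vertices preceding $\min\tau$ split into two classes according to the orientation that $\tau\cup\{x\}$ induces on $\tau$; discard the smaller class. Each step at worst halves the surviving vertex set, and if $K$ is the final surviving set with $|K|=k$, the only $(d-1)$-faces ever processed are exactly those in $\binom{K}{d}$, so $n/2^{\binom{k}{d}}\le k$ and hence $k=\Omega(\log^{1/d}n)$. Acyclicity (indeed collapsibility) of $T|_K$ follows because, after this filtering, the reverse-lex-minimal $(d-1)$-face is free, and remains so inductively after each elementary collapse. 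The point you were missing is to iterate over $(d-1)$-faces rather than over vertices: this is what turns $\binom{k}{1}$ halvings into $\binom{k}{d}$ halvings and produces the exponent $1/d$.
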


\begin{proof} We will go through the $(d-1)$-faces in their reverse lexicographic order and eliminate some vertices along the way. Consider the current $(d-1)$-face $\tau$ and the set $S$ of all currently remaining vertices that precede all the vertices of $\tau$. We delete some of the elements $x \in S$ according to the following criterion. The $d$-face $\sigma=\tau\cup\{x\}$ has its orientation $\epsilon_{\sigma}$ and it induces an orientation on $\tau$. This splits $S$ into two parts according to the orientation induced on $\tau$. We eliminate all the vertices in the smaller of these two parts and all $(d-1)$-faces that contain an eliminated vertex.

We make two claims:
\begin{itemize}
\item
The remaining tournament is collapsible, and hence acyclic.
\item
At least $\Omega(\log^{1/d} n)$ vertices survive the whole process.
\end{itemize}

To prove the first claim, note that the minimal face (in reverse lexicographic order) is free. After that face is being collapsed, the next minimal face becomes free once again, etc.

For the second claim note that at each step, the size of the remaining vertex set is at least a half of its previous size. Let $K$ be the set of vertices that survive the whole process, and let $|K|=k$. The collection of $(d-1)$-faces $\tau$ that are examined in the process is exactly ${K \choose d}$. Consequently, $n/2^{k\choose{d}}\leq k$, which yields the claimed bound $k \ge \Omega(\log^{1/d} n)$.

Tightness follows from Theorem~\ref{thm1} combined with a simple first moment argument. Fix integers $n, k$ and $d$ and consider a random $d$-tournament $T$ on $n$ vertices. Let $X$ be the random variable that counts the number of acyclic $k$-vertex subtournaments of $T$. It follows from Theorem~\ref{thm1} that
\[
\mathbb{E}(X) \le {n\choose k}\frac{k^{O(k^d)}}{2^{k\choose d+1}}.
\]
Consequently, there is a value of $k\le O(\log^{1/d} n)$ for which $\mathbb{E}(X)<1$. The conclusion follows.
\end{proof}

It is well known and easy to show~\cite{er:moser} that every $n$-vertex $1$-tournament contains an acyclic subtournament on $\log_2 n$ vertices and that in a random $1$-tournament the largest acyclic subtournament has $(2+o(1))\log_2 n$ vertices. However, despite many attempts, it seems difficult to close this gap. We therefore suspect that closing the gap between the upper and the lower bound in Theorem~\ref{sub_acyc} will not be an easy task.

\subsection{A connection with Ramsey theory}\label{sec_ramsey}

As usual, we denote
by $R_d(l,k)$ the smallest integer $n$ for which the following holds. Every red/blue coloring of the hyperedges in the complete $n$-vertex $d$-uniform hypergraph contains either a complete $l$-vertex red hypergraph or a complete $k$-vertex blue hypergraph. Relatively little is known about the growth rate of these numbers for $d > 2$. In a recent paper~\cite{conlon}, Conlon, Fox and Sudakov ask in particular, whether $R_d(d+1,k)$ grows like a tower of height $(d-1)$ in $k$ (i.e., $2^{2^{2^{2^{\cdots^{2^k}}}}}$). We are unable to answer their question, but we note that the notion of acyclic $d$-tournaments allows us to extend an old argument of Erd\H{o}s and Hajnal~\cite{er:haj1} and show

\begin{thm}\label{ramsey}
For all $d\ge 1$, $R_{d+2}(d+3,k)\ge 2^{c_d k^d}$. 
\end{thm}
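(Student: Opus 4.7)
The strategy mimics the classical Erdős--Hajnal lower bound $R_3(4,k)\ge 2^{\Omega(k)}$. Given a $d$-tournament $T$ on $[n]$, I color each $(d+2)$-subset $B\subseteq[n]$ red if $T|_B$ is acyclic and blue if $T|_B$ is cyclic. Two tasks remain: show that this coloring never contains a blue clique on $d+3$ vertices (meaning $d+3$ vertices all of whose $(d+2)$-subsets are blue), and that for a random $T$ on $n=2^{c_dk^d}$ vertices no red clique on $k$ vertices appears with positive probability.

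The structural input is that on a $(d+2)$-vertex set $B=\{v_0<\cdots<v_{d+1}\}$, the kernel of $\partial_d$ restricted to the $d$-skeleton of $B$ is one-dimensional, spanned by $\sum_j(-1)^j\sigma_j$ where $\sigma_j=B\setminus\{v_j\}$. Translating by the sign-changes $\epsilon$ shows that $T|_B$ is cyclic iff $\epsilon_{\sigma_j}=\eta(-1)^j$ for some $\eta\in\{\pm1\}$, so exactly $2$ of the $2^{d+2}$ orientations on $B$ are cyclic. For the no-blue-clique claim, pick $d+3$ vertices $v_0<\cdots<v_{d+2}$, let $B_k$ omit $v_k$, and suppose each $B_k$ is cyclic with sign $\eta_k$. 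For $k<l$, the common face $\sigma=B_k\cap B_l$ sits at position $l-1$ inside $B_k$ and at position $k$ inside $B_l$, so cyclicity forces $\eta_k(-1)^{l-1}=\epsilon_\sigma=\eta_l(-1)^k$, i.e.\ $\eta_k\eta_l=(-1)^{l-k-1}$ for every $k<l$. Writing $\eta_k=(-1)^{a_k}$ and $b_k=(a_k+k)\bmod 2$, this rearranges to $b_k\ne b_l$ for every pair $k\ne l$---impossible for $d+3\ge 3$ values in $\{0,1\}$.

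For the probabilistic step, take $T$ uniformly random and fix $K\in\binom{[n]}{k}$. Greedily build a family $\mathcal{F}$ of $(d+2)$-subsets of $K$ with pairwise intersection at most $d$: each chosen subset rules out only $(d+2)(k-d-2)=O(k)$ further candidates (those sharing at least $d+1$ vertices), so $|\mathcal{F}|=\Omega(k^{d+1})$. Since members of $\mathcal{F}$ share no $d$-face (which would require a $(d+1)$-vertex intersection), the events $E_B=\{T|_B\text{ is acyclic}\}$ for $B\in\mathcal{F}$ are mutually independent with $\Pr[E_B]=1-2^{-(d+1)}$ by the structural input. Hence $\Pr[K\text{ is all red}]\le(1-2^{-(d+1)})^{|\mathcal{F}|}\le\exp(-c_dk^{d+1})$, and the expected number of all-red $k$-subsets is at most $\binom{n}{k}\exp(-c_dk^{d+1})\le\exp\!\bigl(k\log n-c_dk^{d+1}\bigr)$. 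This is below $1$ whenever $\log n\le c_d'k^d$, so for $n=2^{c_dk^d}$ some $T$ has no red clique of size $k$; combined with the no-blue-clique fact, $R_{d+2}(d+3,k)>n=2^{c_dk^d}$.

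The main obstacle I anticipate is the sign bookkeeping leading to the consistency identity $\eta_k\eta_l=(-1)^{l-k-1}$ and the preceding one-dimensional-kernel computation; once these are in place the probabilistic half is a routine first-moment argument whose only combinatorial input is the $\Omega(k^{d+1})$ packing bound.
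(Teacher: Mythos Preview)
Your proof is correct and follows essentially the same route as the paper: color $(d+2)$-sets by whether they form a $(d+2)$-cycle, rule out a blue $(d+3)$-clique combinatorially, and rule out a red $k$-clique via a first-moment argument over a packing of $(d+2)$-sets with pairwise intersection at most $d$. The only differences are cosmetic: the paper invokes R\"odl's theorem where your greedy packing already suffices, and for the no-blue-$(d+3)$ step the paper gives a shorter pigeonhole (fix any $d$-subset $S$ of the $d+3$ vertices; among the three $d$-faces $S\cup\{x_i\}$ two induce the same orientation on $S$, so their union is not a cycle) in place of your global parity computation.
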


The smallest cyclic $d$-tournament has $d+2$ vertices. We refer to this tournament simply as a {\em $(d+2)$-cycle} and we note that on a given set of $d+2$ vertices there are exactly two possible $(d+2)$-cycles. In particular, the probability that a random $(d+2)$-vertex $d$-tournament is a $(d+2)$-cycle is $2^{-d-1}$. 

\begin{lem}\label{no_d+2_cycle}
The probability that a random $n$-vertex $d$-tournament contains no $(d+2)$-cycle is at most $2^{-\Omega(n^{d+1})}$.
\end{lem}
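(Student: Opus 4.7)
My plan is to find a large family of $(d{+}2)$-subsets of $[n]$ whose cycle events are mutually independent, and then apply a direct product bound. Concretely, I would exhibit a collection $\mathcal F = \{F_1,\dots,F_L\}$ of $(d{+}2)$-subsets of $[n]$ with $L = \Omega(n^{d+1})$ and $|F_i \cap F_j| \le d$ for all $i \ne j$, and observe that avoiding every $(d{+}2)$-cycle supported on some $F_i$ is already an event of probability $2^{-\Omega(n^{d+1})}$.

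To build $\mathcal F$, I would use the deletion method. Include each $(d{+}2)$-subset of $[n]$ independently with probability $p = c/n$ for a sufficiently small absolute constant $c > 0$. The expected number $X$ of chosen subsets is $p\binom{n}{d+2} = \Theta(c\,n^{d+1})$. Distinct $(d{+}2)$-subsets can intersect in at most $d{+}1$ vertices, and the number of unordered \emph{bad} pairs (intersection exactly $d{+}1$) is $\binom{n}{d+1}\binom{n-d-1}{2} = \Theta(n^{d+3})$, so the expected number $Y$ of bad pairs in the sample is $\Theta(c^2 n^{d+1})$. For $c$ small enough, $\mathbb E[X-Y] = \Omega(n^{d+1})$, so some outcome satisfies $X - Y \ge \mathbb E[X-Y]$, and deleting one subset from each bad pair in that outcome yields the claimed $\mathcal F$.

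The key observation is that the $d$-faces of a $(d{+}2)$-subset $F$ are precisely its $d{+}2$ many $(d{+}1)$-subsets, so distinct $F_i, F_j \in \mathcal F$ share a $d$-face iff $|F_i \cap F_j| \ge d{+}1$. By construction this never occurs, so the cycle events $C_i := \{F_i \text{ is a }(d{+}2)\text{-cycle}\}$ depend on pairwise (hence mutually) disjoint sets of i.i.d.\ uniform orientations and are therefore mutually independent. The remark just before the lemma gives $\mathbb P(C_i) = 2^{-(d+1)}$, and since ``$T$ has no $(d{+}2)$-cycle'' implies $\bigcap_i \overline{C_i}$,
\[
\mathbb P\bigl(T \text{ has no }(d{+}2)\text{-cycle}\bigr) \;\le\; \prod_{i=1}^L \mathbb P(\overline{C_i}) \;=\; \bigl(1 - 2^{-(d+1)}\bigr)^L \;=\; 2^{-\Omega(n^{d+1})}.
\]
The only numerically delicate step is picking $c$ so that $\mathbb E[Y]$ is dominated by $\mathbb E[X]$; since $\mathbb E[Y]/\mathbb E[X] = \Theta(c)$, any small enough $c$ works, and no deeper structural input about $d$-tournaments is needed.
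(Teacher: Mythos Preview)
Your proof is correct and follows the same overall strategy as the paper: build a large family of $(d{+}2)$-subsets no two of which share a $(d{+}1)$-subset, observe that the corresponding cycle events depend on disjoint sets of orientations and are therefore independent, and multiply.

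The only difference is in how the packing $\mathcal F$ is obtained. The paper invokes R\"odl's theorem (the proof of the Erd\H{o}s--Hanani conjecture) to get a near-optimal packing of size $(1-o(1))\tbinom{n}{d+1}/(d+2)$, whereas you use the elementary deletion method to get a packing of size $\Theta(n^{d+1})$ with a weaker constant. Your approach is more self-contained and avoids citing a deep result; the paper's approach gives a sharper leading constant in the exponent, which is irrelevant for the $\Omega$ statement but would matter if one wanted an explicit bound. Both are perfectly adequate here.
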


\begin{proof}
The Erd\H{o}s-Hanani Conjecture was proved by R\"{o}dl (e.g.,~\cite{prob_method}). It implies the existence of a large system of $(d+2)$-sets of vertices no two of which have $d+1$ vertices in common. Specifically, there exists such a system of $(1-o_n(1))\frac{{n \choose d+1}}{d+2}=(1-o_n(1))\frac{n^{d+1}}{(d+2)!}$ sets. As noted above, each member of this system is a cycle with probability $2^{-d-1}$, and the claim follows, since these events are independent.  
\end{proof}

\begin{cor}\label{no_cycle}
There exist $n$-vertex $d$-tournaments in which every subtournament on $c'_d\log^\frac{1}{d} n$ vertices contains a $(d+2)$-cycle. This in particular holds with positive probability for random $d$-tournaments. Here $c_d' > 0$ is a constant that depends only on $d$.
\end{cor}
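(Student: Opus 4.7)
The plan is a straightforward union bound combining Lemma~\ref{no_d+2_cycle} with counting arguments over all $k$-subsets. Let $T$ be a uniformly random $d$-tournament on $[n]$. First, I would observe that for any fixed $k$-subset $S\subseteq[n]$, the induced subtournament $T|_S$ is itself a uniformly random $k$-vertex $d$-tournament. Hence by Lemma~\ref{no_d+2_cycle} applied to $T|_S$, there is a constant $c=c(d)>0$ such that
\[
\Pr\!\bigl[\,T|_S \text{ contains no } (d+2)\text{-cycle}\,\bigr]\;\le\;2^{-c k^{d+1}}.
\]

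Next I would apply the union bound over all $\binom{n}{k}$ choices of $S$ and use the estimate $\binom{n}{k}\le n^k = 2^{k\log_2 n}$:
\[
\Pr\!\bigl[\,\exists\,S\in\tbinom{[n]}{k}:\ T|_S \text{ has no }(d+2)\text{-cycle}\,\bigr]\;\le\;2^{k\log_2 n\,-\,c k^{d+1}}.
\]
Setting $k = c'_d \log^{1/d} n$ with $c'_d := (2/c)^{1/d}$ ensures $c k^{d+1}\ge 2k\log_2 n$, so the right-hand side is at most $2^{-k\log_2 n}<1$ for $n$ large. Consequently, with positive probability the random $d$-tournament $T$ has the property that every $k$-vertex subtournament contains a $(d+2)$-cycle, which proves the existence statement as well as the claim about random $d$-tournaments.

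There is essentially no obstacle here; the only thing to watch is that the exponent in Lemma~\ref{no_d+2_cycle} is $\Omega(n^{d+1})$ (i.e.\ one higher than the trivial $\binom{n}{d+1}$ term coming from counting $(d+2)$-sets), which is exactly what makes the union bound defeat the $\log n$ cost of choosing $S$ and yields the $\log^{1/d} n$ threshold. Note that this matches, up to the constant $c'_d$, the lower bound of Theorem~\ref{sub_acyc}, so the corollary shows that even the weaker property of avoiding the minimal cycle already forces acyclic subtournaments to be of size $O(\log^{1/d} n)$.
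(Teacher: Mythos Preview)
Your argument is correct and is essentially the paper's own proof: a first-moment/union bound over all $\binom{n}{k}$ subsets combined with the $2^{-\Omega(k^{d+1})}$ bound from Lemma~\ref{no_d+2_cycle}, choosing $k=c'_d\log^{1/d}n$ so that the expectation drops below~$1$. (Your parenthetical remark that $\Omega(n^{d+1})$ is ``one higher than the trivial $\binom{n}{d+1}$ term coming from counting $(d+2)$-sets'' is garbled---$\binom{n}{d+1}$ is already $\Theta(n^{d+1})$---but this side comment does not affect the proof.)
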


\begin{proof}
The claim follows from a first-moment argument.
Let $T$ be a random $n$-vertex $d$-tournament, and let $X$ be the number of $k$-vertex subtournaments of $T$ that contain no $(d+2)$-cycle. By the previous lemma 
\[\mathbb{E}(X)\le{n \choose k}2^{-\Omega(k^{d+1})}\] For $k=c'_d\log^\frac{1}{d}n$ this expectation is less than $1$ and the claim follows.
\end{proof}

We can complete now the proof of Theorem~\ref{ramsey}. A $d$-tournament as in Corollary~\ref{no_cycle}, induces a red/blue coloring of the subsets $U\in {[n]\choose d+2}$ as follows. We color $U$ blue if it is a $(d+2)$-cycle, and red otherwise. The claim follows since no set of $d+3$ vertices is entirely blue. To see this, let $S$ be a set of $d$ vertices where $x_1, x_2, x_3$ are the three remaining vertices. Consider the orientation induced on $S$ by each of the three faces $S\cup\{x_i\}$ for $i=1,2,3$. There must be two of these orientations, say for $i=1,2$ for which the orientations on $S$ coincide. But then $S\cup \{x_1, x_2\}$ is not a $(d+2)$-cycle.
\qed

\section{Alternative notions of acyclicity}\label{sec:equiv}

We find it instructive to recall now the one-dimensional situation and see how things change as the dimension grows.
Indeed all of the following properties of a $1$-tournament $T$ are easily seen to be equivalent. 

\begin{enumerate}
\item\label{triangle}
$T$ contains no cyclic triangles.
\item\label{0/1}
$T$ contains no (graph-theoretical) directed cycle.
\item\label{acyclic}
$T$ is acyclic as defined above.
\item\label{collapse} 
$T$ is collapsible.
\item\label{realization}
All edges of $T$ go forward relative to some total order on the vertices.
In other words, the vertices can be mapped to $\mathbb R$ with all edges going from 
left to right.  
\end{enumerate}

As noted below, all the above properties of a tournament $T$ have $d$-dimensional counterparts as follows.

\begin{enumerate}
\item\label{d-triangle}
$T$ contains no $(d+2)$-cycle.
\item\label{d-0/1}
There is no nonempty set of faces in $T$ that sum to zero. In other words, zero is the only solution of Equation (\ref{def:cycle}) in $0/1$ coefficients. 
\item\label{d-acyclic}
$T$ is acyclic.
\item\label{d-collapse} 
$T$ is collapsible.
\item\label{d-realization}
Fix the positive orientation on $\mathbb{R}^d$.
The orientation of the faces of $T$ is induced from some general-position embedding of its vertices in $\mathbb{R}^d$.
\end{enumerate}

As we presently note, these conditions appear in increasing order of strength. We subsequently present examples that show that reverse implications need not hold.

\begin{prop}
The following implications among $d$-dimensional tournaments hold:
\ref{d-realization}$\Rightarrow$\ref{d-collapse}$\Rightarrow$\ref{d-acyclic}$\Rightarrow$\ref{d-0/1}$\Rightarrow$\ref{d-triangle}
\end{prop}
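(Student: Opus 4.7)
The implications $\ref{d-acyclic}\Rightarrow\ref{d-0/1}\Rightarrow\ref{d-triangle}$ are essentially unwinding definitions. A nonzero $0/1$ vector in the kernel of the incidence matrix $A$ is in particular a real nonnegative solution with positive support, that is, a cycle, giving $\ref{d-acyclic}\Rightarrow\ref{d-0/1}$. For $\ref{d-0/1}\Rightarrow\ref{d-triangle}$, note that in a $(d+2)$-cycle on $U=\{u_0<\cdots<u_{d+1}\}$ the restriction of $A$ to the $d+2$ top-dimensional faces $\sigma_j=U\setminus\{u_j\}$ has exactly two nonzero entries per row (indexed by $d$-subsets of $U$); a short sign calculation forces any positive solution of $Av=0$ to be a constant multiple of the all-ones vector, so $v\equiv 1$ is a $0/1$ cycle. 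The implication $\ref{d-collapse}\Rightarrow\ref{d-acyclic}$ was already noted in the introduction.

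The substantive step is $\ref{d-realization}\Rightarrow\ref{d-collapse}$, which I would prove by a double induction, the outer one on $d$ and the inner one on $n$. The base case $d=1$ is the classical observation that a transitive $1$-tournament is collapsible: its maximum vertex is always a free $0$-face. For the inductive step let $\phi:V\to\R^d$ be a general-position realization of $T$ on $n\ge d+2$ vertices, and pick $v\in V$ with $\phi(v)$ a vertex of $\operatorname{conv}(\phi(V))$. The plan is: (i) show $lk_T(v)$ is realized in $\R^{d-1}$; (ii) invoke the outer IH to collapse $lk_T(v)$; (iii) lift that collapse sequence to a sequence in $T$ that removes exactly the $d$-faces of $T$ containing $v$; and (iv) apply the inner IH to collapse the remainder $T|_{V\setminus\{v\}}$, which is realized by $\phi|_{V\setminus\{v\}}$.

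For (i), choose a hyperplane $H\subset\R^d$ strictly separating $\phi(v)$ from the other images --- possible because $\phi(v)$ is extreme --- and let $\psi(u)$ be the intersection of $H$ with the segment $\overline{\phi(v)\phi(u)}$. Writing the columns $\phi(u_k)-\phi(v)$ in coordinates adapted to $H$ and its unit normal and subtracting one column from the rest kills the normal components of all but one column; the resulting determinant equals, up to a fixed sign depending on the orientation of $H$ and a reordering factor $(-1)^{p^{*}}$ (where $p^{*}$ is the position of $v$ in sorted $\sigma=\tau'\cup\{v\}$), the oriented $(d-1)$-volume of $\psi(u_1),\dots,\psi(u_d)$. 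This position-dependent factor is cancelled precisely by the combinatorial factor $(\tau';\sigma)=(-1)^{d-p^{*}}$ built into the definition of $lk_T(v)$, so $\psi$ realizes $lk_T(v)$ on the nose (after perhaps a global flip of $H$).

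The technical heart of (iii) is the lemma: a $(d-2)$-face $\tau\subset V\setminus\{v\}$ is free in $lk_T(v)$ if and only if $\tau\cup\{v\}$ is free in $T$. Both conditions assert that $\epsilon_{\tau\cup\{u,v\}}$ times a particular product of $(\,\cdot\,;\,\cdot\,)$-signs is independent of $u\in V\setminus(\tau\cup\{v\})$; a case split on whether $u<v$ or $u>v$ (which controls the positions of $u$ and $v$ in the sorted $\tau\cup\{u,v\}$, and hence all the incidence signs) shows that the two expressions differ by the factor $(-1)^{p^{*}-d}$, depending only on the position of $v$ in sorted $\tau\cup\{v\}$ and not on $u$. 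This sign bookkeeping is the main obstacle: it is elementary but requires care in the case analysis. Once it is verified, the collapse sequence in $lk_T(v)$ (guaranteed by the outer IH) lifts face-by-face to a collapse of the $d$-faces of $T$ containing $v$, after which the inner IH finishes $T|_{V\setminus\{v\}}$ and concatenating the two sequences completes the induction.
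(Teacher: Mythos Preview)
Your argument is correct, but for $\ref{d-realization}\Rightarrow\ref{d-collapse}$ the paper takes a much shorter, purely geometric route. Rather than inducting on $d$ through links, it simply observes that any $(d-1)$-face $\tau$ whose image spans a facet of $\operatorname{conv}(\phi(V))$ is free: every remaining vertex lies on the same side of the supporting hyperplane through $\phi(\tau)$, so all $d$-faces $\tau\cup\{u\}$ induce the same orientation on $\tau$. Collapsing all such boundary facets exposes new boundary facets, which are free for the same reason, and iterating this ``convex peeling'' empties the tournament. No sign bookkeeping, no induction on dimension.

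Your approach is more laborious but not without merit: the link-realization step (central projection from an extreme vertex onto a separating hyperplane realizes $lk_T(v)$ in $\R^{d-1}$) and the freeness-transfer lemma (free $(d-2)$-faces of $lk_T(v)$ correspond to free $(d-1)$-faces of $T$ through $v$) are clean statements in their own right, and your proof makes the collapse order completely explicit. The paper's argument, by contrast, is a two-line sketch that leaves the ``etc.'' to the reader. So you have traded brevity for explicitness; both are valid, but you should be aware that the convex-hull observation gets you there in one stroke.
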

\begin{proof}

Most implications are very easy to verify and we prove here only the implication \ref{d-realization}$\Rightarrow$\ref{d-collapse} (that \ref{d-collapse}$\Rightarrow$\ref{d-acyclic} was already mentioned before). Let $T$ be a $d$-tournament that is realizable by means of an embedding $\iota$ of the vertex set in ${\mathbb R}^d$. Every $(d-1)$-face on the boundary of the convex hull $\text{conv}(\text{image}(\iota))$ is free. Once those are eliminated, the new boundary $(d-1)$-faces are free again, etc. 
\end{proof}

We turn to show that the reverse implications do not hold. We note that it suffices to consider $2$-dimensional examples to this end.

\begin{prop}
\ref{d-triangle}$\not\Rightarrow$\ref{d-0/1}
\end{prop}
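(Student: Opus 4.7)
The plan is to exhibit, in dimension $d=2$, a tournament $T$ that has a non-trivial $0/1$-cycle (so property \ref{d-0/1} fails) yet no $4$-cycle (so property \ref{d-triangle} holds). The construction is based on viewing the octahedron as a simplicial triangulation of the $2$-sphere. Label six vertices $1,\dots,6$ and partition them into three antipodal pairs $\{1,2\},\{3,4\},\{5,6\}$. Call the eight triangles that meet every pair in exactly one vertex the \emph{octahedral} triangles; call the other twelve triangles (those containing a full antipodal pair) \emph{meridional}.

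First I would orient the octahedral triangles so that, viewed as a simplicial $2$-chain, they form a cycle: $\sum_\sigma \eta_\sigma\,\partial\sigma=0$. Since the octahedron is a triangulated $2$-sphere this is possible, and a short calculation yields $\eta=+1$ on $\{1,3,5\},\{1,4,6\},\{2,3,6\},\{2,4,5\}$ and $\eta=-1$ on the remaining four. I then define $T$ on $[6]$ by $\epsilon_\sigma=\eta_\sigma$ on each octahedral face and $\epsilon_\sigma=+1$ on each meridional face. Because the $\sigma$-column of the incidence matrix of $T$ equals $\epsilon_\sigma\,\partial\sigma$, the indicator vector of the eight octahedral faces lies in its kernel. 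Thus these eight faces form a nontrivial $0/1$-cycle in $T$, so property \ref{d-0/1} fails.

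The remaining task is to show that no four vertices of $T$ induce a $4$-cycle. A short computation from the definition (see Example \ref{ex1}) shows that the cyclic $2$-tournaments on $\{a<b<c<d\}$ are exactly those with orientation pattern $(\epsilon_{abc},\epsilon_{abd},\epsilon_{acd},\epsilon_{bcd})=\pm(1,-1,1,-1)$. I would classify the fifteen $4$-subsets of $[6]$ by the number of antipodal pairs they contain: three subsets contain two pairs, in which case all four induced triangles are meridional and the pattern is $(1,1,1,1)$, trivially non-alternating. The other twelve subsets contain exactly one antipodal pair, and a direct inspection shows that in each case the pattern has exactly three $+1$'s and one $-1$, which is again non-alternating.

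The main obstacle is this last step, but it reduces to a short finite check that can be compressed using the symmetries induced by permuting the three pairs and swapping the two vertices within any pair; only one or two representative $4$-subsets need to be inspected in detail, and the rest follow by symmetry. This verifies $T$ has no $4$-cycle while carrying a $0/1$-cycle, establishing \ref{d-triangle}$\not\Rightarrow$\ref{d-0/1}.
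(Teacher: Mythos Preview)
Your construction is correct and is essentially the same as the paper's: both build the example from the octahedron, orienting the eight boundary triangles as a $2$-sphere (giving the $0/1$-cycle) and orienting the remaining twelve ``diagonal'' triangles consistently along the three antipodal edges. The paper uses the vertex set $\{\pm e_1,\pm e_2,\pm e_3\}$ and orients each meridional triangle so that the induced orientation on $[-e_i,e_i]$ is $-e_i\to e_i$; your choice of pairs $\{1,2\},\{3,4\},\{5,6\}$ together with $\epsilon=+1$ on all meridional faces achieves exactly the same effect (because no third vertex lies between the two vertices of any pair, so $(\tau;\sigma)=+1$ uniformly).

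The one place where your write-up is a bit more laborious than necessary is the ``no $4$-cycle'' step. You argue by counting signs and invoking symmetry; the paper dispatches this in one line: any four vertices must contain an antipodal pair, that edge lies in exactly two of the four induced triangles (both meridional), and both induce the same orientation on it, so the edge is free and the $4$-subset cannot be a cycle. This is precisely the conceptual reason behind your observation that the pattern is never alternating, and it spares you the case check and the somewhat delicate symmetry argument (note that the octahedral symmetries do not respect the vertex ordering, so using them on the sign patterns requires care). Your sign-count is nonetheless valid: the two octahedral faces in a one-pair $4$-subset share an edge of the octahedron and therefore carry opposite $\eta$-values, giving the $(+1,+1,+1,-1)$ pattern you claim.
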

\begin{proof}
Consider the three-dimensional octahedron $\text{conv}(\pm e_1, \pm e_2, \pm e_3)$. We orient its eight triangular facets according to the outer normal of this polytope. These eight faces sum to zero, hence this $2$-tournament is $0/1$-cyclic. Each of the remaining twelve $2$-faces contains an edge of the form $[-e_i,e_i]$ for some $3 \ge i \ge 1$. We orient these faces so that the orientation induced on this edge is $-e_i\to e_i$. Let us show that no set of four vertices can form a $d+2=4$-cycle. Every set of four vertices must contain at least one of the pairs $\{-e_i,e_i\}$ and cannot, therefore, be cyclic.
\end{proof}

\begin{prop}
\ref{d-0/1}$\not\Rightarrow$\ref{d-acyclic}
\end{prop}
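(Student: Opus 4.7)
The plan is to exhibit an explicit $2$-tournament possessing a nonnegative real cycle but no $\{0,1\}$ cycle. Recall that via the change of variables $u_\sigma = \epsilon_\sigma v_\sigma$ (with $D=\operatorname{diag}(\epsilon)$), a nonnegative cycle $v$ in $T$ corresponds to a simplicial $2$-cycle $u\in Z_2 = \ker\partial_2$ with $u_\sigma\epsilon_\sigma\ge 0$, and a $\{0,1\}$ cycle corresponds to $u_\sigma\epsilon_\sigma\in\{0,1\}$. So the task is to choose orientations $\epsilon$ so that $Z_2$ contains some sign-compatible $u\ne 0$, but no sign-compatible $u$ has all $|u_\sigma|\le 1$.

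I would start from a primitive $2$-cycle in $Z_2$ with a non-uniform coefficient vector. The most natural candidate is $u = \partial(\sigma_1)+\partial(\sigma_2)$ for two tetrahedra $\sigma_1,\sigma_2$ sharing a single $2$-face $\tau$. Then $u_\tau = \pm 2$ and all other entries of $u$ are $\pm 1$; after setting $\epsilon_\sigma = \operatorname{sgn}(u_\sigma)$ on $\operatorname{supp}(u)$, the vector $v_\sigma := |u_\sigma|$ (with $v_\tau = 2$) is a positive real cycle, certifying that $T$ is not acyclic.

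The main technical step is to verify that $T$ contains no $\{0,1\}$ cycle. The immediate obstruction is that the two ``halves'' $\partial(\sigma_1)$ and $\partial(\sigma_2)$ are themselves simplicial $2$-cycles whose sign patterns match $\epsilon$ exactly on their supports, producing two $4$-cycles which are $\{0,1\}$ cycles and ruin the construction. To evade this I would work on $n\ge 6$ vertices and enrich $u$ by adding further tetrahedral boundaries $\partial(\sigma_i)$, chosen so that on every $4$-subset $T\subset V$ the $\epsilon$-values forced by $u$ on the $\le 4$ faces of $T$ lying in $\operatorname{supp}(u)$ fail to be alternating (and the remaining free orientations can be set to prevent completion to an alternating pattern). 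The extra freedom on faces outside $\operatorname{supp}(u)$ is then used to block any larger $\{0,1\}$ cycle (e.g.\ a bi-pyramidal $6$-cycle); because the inventory of minimal $\{0,1\}$ cycles of small support is finite, this reduces to a finite case analysis.

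The principal difficulty is an \emph{indecomposability} issue: if a positive real cycle $v$ decomposes as a sum $v_1+v_2$ of sign-compatible positive cycles, then $v_1$ and $v_2$ (after scaling) give $\{0,1\}$ subcycles. Thus $u$ must correspond to an extreme ray of the cycle cone $\{v\ge 0 : Av=0\}$, and the primitive generator of that ray must be non-uniform on its support. Engineering an example in which the cycle cone has only such non-uniform extreme rays, and carrying out the finite verification against all $4$-subsets and all small $\{0,1\}$ candidates, is the real technical challenge and the reason a construction on very few vertices does not suffice.
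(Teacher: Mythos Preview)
Your reformulation via $u=Dv\in Z_2$ is correct, and you have isolated the core difficulty: one needs an extreme ray of $\{v\ge 0:Av=0\}$ whose primitive generator is not $\{0,1\}$-valued. But the proposal does not actually produce one. Any enrichment $u=\sum_i\partial\sigma_i$ by tetrahedral boundaries inherits the problem you already flagged: each $\partial\sigma_i$ is itself a $2$-cycle, and it is sign-compatible with $\epsilon=\operatorname{sgn}(u)$ on its support unless other summands override it at some face --- which merely relocates the $4$-cycle to the newly added tetrahedron. No termination is argued. The promised ``finite case analysis'' is also not finite in any useful sense: minimal $0/1$ $2$-cycles are boundaries of triangulated orientable surfaces of arbitrary size, so ruling out $4$-cycles and bipyramids is far from sufficient.

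The ingredient you are missing is topological. The paper starts from a triangulation of $\mathbb{RP}^2$, orienting every $2$-face clockwise in the planar model, and adds the central face $\sigma=\{1,2,3\}$. Non-orientability is exactly what forces the coefficient $2$: the sum of the triangulation's faces plus $2\sigma$ vanishes, and in that partial tournament this is the \emph{unique} cycle up to scaling, hence automatically an extreme ray with a non-$\{0,1\}$ generator. The extension to a full tournament is also more delicate than your sketch suggests: the $6$-point $\mathbb{RP}^2$ fails (both orientations of $\{3,4,5\}$ already complete a $0/1$ cycle), so the paper passes to a $10$-point triangulation having the extra property that no single additional face creates a second cycle, and then orients the remaining faces greedily while preserving the unique-cycle invariant. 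If both orientations of a new face $\rho$ created cycles, their sum would be a $\rho$-free nonnegative cycle, hence a multiple of the original one; nonnegativity then confines both new cycles to the original faces plus $\rho$, contradicting the extra property. This short invariant argument replaces your open-ended verification.
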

\begin{proof}
Start with the standard $6$-point triangulation of the projective plane in Figure~\ref{fig:2}, where each of the $10$ faces is oriented clockwise. Now add the face $\sigma=\{1,2,3\}$ with the orientation $\epsilon_{\sigma}=+1$ i.e., $1\to 2\to 3\to 1$. Note that the sum of the $10$ faces $+2\sigma$ is zero. Hence this partial tournament is already cyclic, but contains no cycle in $0/1$ coefficients. We could try and orient the remaining $9$ faces so as to maintain the property that there is no cycle in $0/1$ coefficients, but this plan must fail. Consider the face $\rho=\{3, 4, 5\}$ and note that both $\pm \rho$ are expressible as $0/1$ combinations of already oriented faces, namely
\[ \rho=\sigma + \{1,3,4\} + \{1,4,5\} + \{1,2,5\} + \{2,3,5\}\]
\[-\rho=\sigma + \{2,3,4\} + \{2,4,6\} + \{4,5,6\} + \{3,5,6\} + \{1,2,6\} + \{1,3,6\}.\]
Consequently, however we orient $\rho$, a $0/1$-cycle is created.

As it turns out, this plan does work if we slightly modify the above construction. Start instead from the $10$ point triangulation in Figure~\ref{fig:2} along with the face $\sigma=\{1,2,3\}$. Again the $18$ faces in this triangulation are oriented clockwise and $\epsilon_{\sigma}=+1$. Consequently,
\begin{itemize}
\item
This partial tournament has a single cycle, {\em not} in $0/1$ coefficients.
\item
No additional cycle can be created by the addition of any single oriented face.
\end{itemize}
It follows that we can orient the remaining faces one by one so as to preserve the first property. Let $\rho$ be a face which wasn't oriented yet. Suppose that both orientations of $\rho$ are creating new cycles. Namely, $\sum v_F F +\rho=0$ and $\sum v'_F F -\rho=0$. Then $\sum (v_F + v'_F) F=0$ is another cycle, which doesn't involve $\rho$. Hence, this must be (a positive constant times) the only existing cycle. This means that the two new cycles created by $\pm\rho$ are using only the original faces and $\rho$, contrary to the above second property. 
\end{proof}

\begin{figure}
 \centering
 \includegraphics[width=130mm,height=60mm]{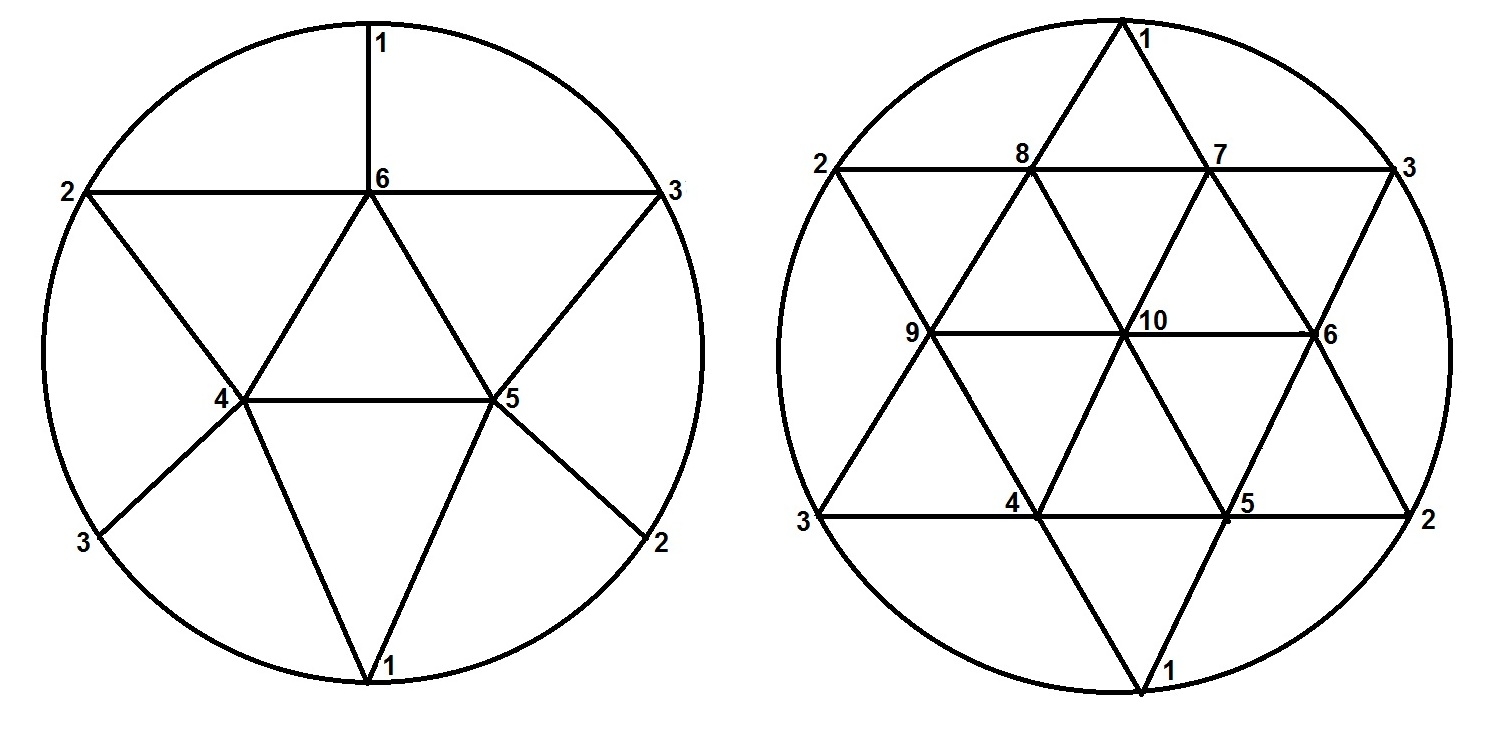} 
 \caption{The standard $6$ point triangulation of the real projective plane, and a modified $10$ point triangulation.}
 \label{fig:2}
\end{figure}

\begin{prop}
\ref{d-acyclic}$\not\Rightarrow$\ref{d-collapse}
\end{prop}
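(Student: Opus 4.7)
The plan is to construct an acyclic $2$-tournament $T$ on some vertex set $V$ which has no free edge; such a $T$ admits no elementary collapse and is therefore not collapsible, while being acyclic by construction.

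The starting point is the classical fact from PL topology that there exist finite $2$-dimensional simplicial complexes which are contractible but not collapsible, the prototype being Zeeman's dunce hat $D$. I would fix a triangulation $K$ of $D$ on a vertex set $V$, chosen ``neighborly'' so that its $1$-skeleton is the complete graph on $V$. Orienting the triangles of $K$ arbitrarily, I obtain a partial $2$-tournament on $V$. Because $D$ is contractible, the real second homology $H_2(K;\mathbb{R})$ vanishes, so the $2$-cycle group is trivial and no nonzero real chain supported on the triangles of $K$ satisfies $\partial c=0$. In particular no strictly positive combination of its triangles vanishes, so the partial tournament is acyclic in the sense of Equation~\eqref{def:cycle}. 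On the other hand, since $D$ is not collapsible, every edge of $K$ must lie in at least two triangles of $K$ whose induced orientations on the edge disagree; otherwise that edge would be free in $K$ and an elementary collapse could be initiated.

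The next step is to extend $K$ to a complete $2$-tournament $T$ on $V$ by orienting the triangles outside $K$ one at a time, maintaining acyclicity. This is possible by the argument already used in the proof of the previous proposition: if both orientations $\pm\rho$ of the next triangle produced a cycle, say $\sum v_F F+\rho=0$ and $\sum v'_F F-\rho=0$, then $\sum (v_F+v'_F)F=0$ would be a cycle supported entirely on already-oriented faces, contradicting acyclicity of the partial tournament built so far. Hence at every step at least one orientation of $\rho$ is safe, and after all triangles are oriented $T$ is acyclic. Finally, for any edge $\{j,k\}$ of $V$, neighborliness places $\{j,k\}$ inside $K$, and by the preceding paragraph $\{j,k\}$ is contained in two triangles of $K$ with opposing induced orientations on it; these triangles remain in $T$, so $\{j,k\}$ is non-free in $T$. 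Thus no elementary collapse is available in $T$, and $T$ is not collapsible.

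The main obstacle I anticipate lies in the first step, namely exhibiting a triangulation $K$ of a contractible non-collapsible $2$-complex that is simultaneously neighborly and has every edge contained in at least two triangles with opposing induced orientations. Standard small triangulations of the dunce hat are not neighborly, so some care is needed to augment them: one can attach extra triangles via new vertices chosen so that the augmented complex remains contractible (for instance via controlled cones on subcomplexes) while filling in the missing edges, or alternatively work with explicit non-collapsible contractible complexes with complete $1$-skeleton such as suitable triangulations of Bing's house with two rooms.
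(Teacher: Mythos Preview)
Your approach is conceptually appealing and quite different from the paper's, which simply exhibits explicit acyclic $2$-tournaments found by computer search (a point in $\mathbb{R}^{\binom{9}{2}}$ giving a $9$-vertex example, and a $10$-vertex example with no free edges whatsoever) and verifies non-collapsibility directly.

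However, there is a genuine gap. You write that you orient the triangles of $K$ \emph{arbitrarily} and then assert: ``since $D$ is not collapsible, every edge of $K$ must lie in at least two triangles of $K$ whose induced orientations on the edge disagree; otherwise that edge would be free in $K$ and an elementary collapse could be initiated.'' This conflates two distinct notions of freeness. In PL topology an edge is free when it lies in exactly one $2$-face, and non-collapsibility of the dunce hat tells you (at best) that no edge is free in \emph{that} sense. In the paper's sense an edge is free when \emph{all} triangles containing it induce the same orientation on it; this depends on the orientations chosen, not only on the underlying complex. An edge lying in two triangles is never topologically free, yet for half of the orientation choices it \emph{is} tournament-free. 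So topological non-collapsibility of $K$ does not guarantee that an arbitrary---or indeed any---orientation of its triangles has no tournament-free edge. You would need to exhibit a specific orientation for which every edge sees both signs, and this is a nontrivial combinatorial condition you have not verified; for instance, on a triangulated closed non-orientable surface no such orientation exists, since ``every edge sees both signs'' is exactly the statement that the triangle orientations are globally coherent.

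Your final paragraph shows you are aware of this extra requirement, but the proposal stops short of actually producing a neighborly contractible $K$ together with such an orientation; the suggested augmentations by cones are not carried out and would have to be handled carefully to avoid creating new free edges. Until this is done the argument is a plausible programme rather than a proof. The extension step---orienting the remaining triangles one at a time while preserving acyclicity---is correct and is precisely the device used in the paper's preceding proposition.
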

\begin{proof}
Using a computer search, we found a point $x\in{\mathbb R}^{{9\choose 2}}$ which doesn't satisfy any of the equations $x_{ij}-x_{ik}+x_{jk}=0$ ($i<j<k$). As explained above, such a point belongs to a unique chamber which corresponds to an acyclic $9$-vertex $2$-tournament. The resulting tournament is checked by the computer not to be collapsible. The point we found is \[x=(42, 0, 3, 88, 91, 87, 66, 28, 64, 60, 87, 11, 39, 81, 37, 51,  0, 23,\] \[77, 33, 23, 58, 11,  7, 70, 64, 73, 57, 86, 52, 98, 49, 57, 100, 43, 60).\] (Coordinates are indexed by unordered pairs and appear in lexicographic order).

Better still, consider the following point in ${\mathbb R}^{{10\choose 2}}$.
\[x=(76, 61, 70,  6, 95, 97, 45, 11, 26, 12, 33, 93,  5, 97, 92,  9, 48, 26, 58, 82,  4, 96,\] \[14, 83, 87, 92, 93, 92, 92, 18, 64, 11, 76,  4, 39, 82, 24, 94, 25, 36, 30, 40, 64, 21, 7).\]
This example is easier to verify, since the  acyclic $10$-vertex $2$-tournament corresponding to $x$ has no free faces.  
\end{proof}

\begin{prop}
\ref{d-collapse}$\not\Rightarrow$\ref{d-realization}
\end{prop}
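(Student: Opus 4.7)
The plan is to exhibit an explicit $2$-tournament on a modest vertex set that is collapsible yet fails to be realizable, in the computer-assisted spirit of the preceding propositions.

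First I would observe that property \ref{d-realization} for $d=2$ is precisely the condition that the chirotope of $T$ --- the assignment $\{i<j<k\} \mapsto \epsilon_{ijk}$ --- coincides with that of some planar point configuration, i.e.\ that the associated uniform rank-$3$ oriented matroid is realizable. It is classical (Ringel) that every such oriented matroid on at most $8$ elements is realizable, so any candidate must have $n \ge 9$ vertices. On $9$ elements there are well-known non-realizable chirotopes, arising from Pappus's theorem and from non-stretchable pseudoline arrangements, which give natural starting points.

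Concretely I would take such a non-realizable chirotope and read off the $2$-tournament $T$ on $[9]$ that it defines. Non-realizability of $T$ then holds by construction, since a realization would produce a planar point set with exactly the forbidden chirotope. It remains only to verify that $T$ is collapsible, which I would do by computer search for a valid sequence of elementary collapses --- an ordering of the $\binom{9}{3}=84$ triangles in which each triangle removed at its turn contains a currently-free edge --- exactly in the style of the verification used for the proposition $\ref{d-acyclic}\not\Rightarrow\ref{d-collapse}$. If the specific non-Pappus chirotope does not happen to be collapsible, I would cycle through the other small non-realizable uniform rank-$3$ oriented matroids (tabulations exist in the oriented-matroid literature) or else perturb the coordinates in the arrangement of Subsection \ref{sub:arr} so as to land in a non-realizable chamber adjacent to a collapsible one, testing collapsibility for each candidate.

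The main obstacle is reconciling the two requirements: non-realizability is a global algebraic obstruction living in the Grassmannian, while collapsibility is a purely combinatorial, local condition, and there is no a priori reason these should coexist in a small tournament. This is precisely why the argument is inherently computer-assisted: one must produce both a concrete tournament on $9$ or slightly more vertices and an explicit collapse sequence certifying it, after which non-realizability is read off from the underlying oriented-matroid construction.
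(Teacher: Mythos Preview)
Your proposal rests on a conflation that makes the argument much harder than necessary. Property~\ref{d-realization} asks only that the sign map $\{i<j<k\}\mapsto\epsilon_{ijk}$ agree with the chirotope of \emph{some} planar point set; it does \emph{not} presuppose that $T$ already satisfies the oriented-matroid (Grassmann--Pl\"ucker) axioms. Hence a $2$-tournament can fail to be realizable simply by failing to be a chirotope at all, and such failures occur already on five vertices. Ringel's theorem says only that every uniform rank-$3$ oriented matroid on $\le 8$ elements is realizable; it says nothing about arbitrary $\pm1$ maps on triples. Your assertion that ``any candidate must have $n\ge 9$ vertices'' is therefore false, and with it the rationale for the computer search.

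The paper exploits exactly this: it exhibits a collapsible $2$-tournament on \emph{five} vertices and certifies non-realizability by a one-line structural observation --- in any planar realization the free edges are precisely the convex-hull edges and hence form a single cycle, whereas in the example the free edges $\{1,2\},\{1,3\},\{2,3\},\{4,5\}$ form a disconnected graph. Collapsibility is immediate since every triangle contains one of these four free edges. By contrast, your plan produces no concrete tournament and leaves the essential step (``verify collapsibility by computer'') entirely hypothetical; even if a non-Pappus chirotope happened to be collapsible, the resulting proof would be far less transparent than the five-vertex one.
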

\begin{proof}
We describe a collapsible $5$-vertex $2$-tournament which is not realizable in the plane.
We note first that there are precisely six isomorphism types of acyclic $5$-vertex $2$-tournaments, three of which are realizable in the plane.

All other three are collapsible. We prove this for a specific tournament: Let vertices $1,2,3$ be any triangle in the plane, with $4,5$ being mapped to the same point in the interior of the triangle (Think of $5$ as residing 'above' $4$). All faces, except for the three faces containing the edge $\{4,5\}$ are orientated clockwise. These three faces are oriented consistently with the orientation $4\to 5$ of the edge $\{4,5\}$. The resulting $2$-tournament is collapsible, since all the edges $\{4,5\}, \{1,2\},\{1,3\},\{2,3\}$ are free, and each face contains one of these edges. 

To see that this tournament is not realizable in the plane, note that its set of free edges form a disconnected graph, whereas in a realizable $d$-tournament the free $(d-1)$-faces form a (topological) cycle.
\end{proof}

Although we are mostly concerned here with acyclic $d$-tournaments, we find the other notions interesting as well. Below we make some comments about them, with a special interest in the relevant enumeration problems.

\subsection{Avoiding a $(d+2)$-cycle}

Leader and Tan's notion of a $d$-dimensional cycle \cite{leader} (which they call ``a directed simplex") coincides with our $(d+2)$-cycle.
In Section \ref{sec_ramsey} we make several observations concerning $(d+2)$-cycle-free $d$-tournaments. As mentioned, it is a classical fact that the largest acyclic subtournament of a random $1$-tournament has logarithmic order. Corollary~\ref{no_cycle} gives a $d$-dimensional analog of this statement, which is even stronger, since ``acyclic" is replaced by ``contains no $(d+2)$-cycle".

We give here upper bounds on the number of acyclic tournaments under the various interpretations of acyclicity, but we still do not know how many $n$-vertex $(d+2)$-cycle-free $d$-tournaments there are. The proof of Lemma~\ref{no_d+2_cycle} gives the upper bound
\[2^{\frac{1+o(1)}{d+2} {n\choose d+1}}\]
which far exceeds the other upper bounds proved here on the number of ``acyclic" tournaments. Whether or not this gap is inevitable we do not know.

\subsection{No $0/1$-cycles}
Theorem~\ref{easy_ub} is initially presented as an easy way to derive an upper bound on the number of acyclic tournaments. However its proof and specifically Lemma~\ref{lem:deg_seq} show the equivalence of the following two classes of $d$-tournaments: (i) Those that contain no cycle in $0/1$ coefficients, and (ii) Those that are uniquely reconstructible from their degree sequence.

In particular, Theorem~\ref{easy_ub} gives an upper bound on the number of $d$-tournaments with no $0/1$-cycles.

\subsection{Collapsible $d$-tournaments}
The lower bound in Theorem~\ref{thm1} actually applies to the smaller class of collapsible $d$-tournaments. To see this, note that inequality (\ref{recursion}) can be proved likewise if ``acyclic'' is replaced by ``collapsible''. It is interesting to get better upper and lower bounds on the number of collapsible $n$-vertex $d$-tournaments. In particular, better lower bounds will improve the lower bound in Theorem~\ref{thm1}. 

\subsection{Realizable $d$-tournaments} An ${\mathbb R}^d$-realizable $d$-tournaments is synonimous with an $n$-point {\em order type} in ${\mathbb R}^d$. Order types are of much interest in discrete geometry (see~\cite{matousek}). Their number is known to be $n^{(1+o(1))d^2 n}$~\cite{gp}.

\section{Some final comments and open questions}\label{sec:final}

We feel compelled
to recall the following well-known conjecture

\begin{conj}[Erd\H{o}s-Hajnal~\cite{er:haj2}]
For every graph $H$ there is a $\gamma > 0$ such that every $n$-vertex
graph which contains no induced copy of $H$ must have either a clique
or an anticlique of cardinality $\ge n^{\gamma}$.
\end{conj}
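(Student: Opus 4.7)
The Erd\H{o}s--Hajnal conjecture has been open for roughly forty years, and no full proof appears likely within the scope of the present techniques; I will therefore outline the natural lines of attack and say honestly where each stalls.

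The first step in the plan is the standard reduction via modular decomposition. If $H$ decomposes as a substitution $H = H_1[H_2]$, then the Alon--Pach--Solymosi lemma yields the conjecture for $H$ from the conjecture for $H_1$ and $H_2$, with an effective dependence on the exponents $\gamma_1,\gamma_2$. Iterating, it suffices to treat the case of \emph{prime} $H$ (those with no nontrivial module), and in particular one may assume $|V(H)|\ge 4$.

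For prime $H$, the plan is to sharpen the classical Erd\H{o}s--Hajnal argument. That argument shows, for any fixed $H$, that every $H$-free graph on $n$ vertices contains a clique or anticlique of size $\exp(c\sqrt{\log n})$, by iteratively finding a vertex $v$ whose neighborhood or non-neighborhood is a constant-fraction $H$-free subgraph and recursing. To upgrade the $\exp(c\sqrt{\log n})$ bound to a genuinely polynomial one, I would seek a structural dichotomy: either $H$-freeness forces a large bipartite-like or otherwise highly structured subgraph, from which a polynomial homogeneous set is extracted directly, or it forces a \emph{constant} density bias at each recursive step (rather than a $1/\sqrt{\log n}$ one) that aggregates into a polynomial-size clique or anticlique. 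A secondary plan would be to borrow from Section~\ref{sec_ramsey}: the lower bound in Theorem~\ref{ramsey} shows that acyclicity-type hypotheses boost Ramsey numbers, and it is tempting to reformulate $H$-freeness as an acyclicity condition on an auxiliary high-dimensional tournament encoding induced $|V(H)|$-tuples, then apply a dual of that bound.

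The main obstacle, and the reason the conjecture has resisted proof, is that every known technique incurs a multiplicative loss in the exponent at each recursive step, and controlling such losses across the roughly $\log\log n$ levels of recursion appears to require a fundamentally new idea. The smallest prime case of real interest is $H=C_5$ (and its close cousin, the bull), already the subject of substantial work by Chudnovsky, Seymour and collaborators; until that single case is understood, a general proof seems out of reach. I would therefore begin concretely at $C_5$, trying to prove a structure theorem strong enough to produce polynomial homogeneous sets, and only afterwards ask whether the resulting mechanism generalizes.
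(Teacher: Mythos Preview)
Your assessment is entirely appropriate: the statement in question is a \emph{conjecture}, not a theorem, and the paper does not provide a proof of it. It appears in Section~\ref{sec:final} (``Some final comments and open questions'') precisely as an open problem that the authors recall in order to suggest possible high-dimensional analogs; no argument toward its resolution is offered or claimed. So there is no ``paper's own proof'' against which to compare your proposal.

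Given that, your write-up is honest and well-informed. You correctly identify the Alon--Pach--Solymosi substitution reduction (indeed, reference~\cite{solymosi} is cited in the paper for the tournament reformulation of the conjecture), the classical $\exp(c\sqrt{\log n})$ bound, and the fact that the conjecture remains open even for small prime graphs such as $C_5$. One minor correction: the bull case was in fact settled by Chudnovsky and Safra, so it is not on the same footing as $C_5$. The attempt to connect the problem to the acyclicity framework of Section~\ref{sec_ramsey} is creative but speculative; the paper itself makes no such claim, and it is unclear how forbidding an induced subgraph would translate into an acyclicity condition on an auxiliary tournament in a way that yields polynomial bounds.
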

As shown in~\cite{solymosi} this conjecture can be restated as follows:
\begin{conj}
For every $1$-tournament $F$ there is a $\gamma > 0$ such that every
$n$-vertex $1$-tournament which contains no copy of $F$ has an
acyclic subtournament on $\ge n^{\gamma}$ vertices.
\end{conj}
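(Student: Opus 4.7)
Since this conjecture is precisely the Erd\H{o}s--Hajnal conjecture in its tournament formulation, the plan I would follow is the standard reduction-and-dichotomy template, with the honest admission that the final step is open. First, I would invoke the substitution closure theorem of Alon, Pach and Solymosi: if two tournaments $F_1, F_2$ each satisfy the conjecture with exponents $\gamma_1, \gamma_2$, then every tournament obtained by substituting $F_1$ into a vertex of $F_2$ also satisfies it with some $\gamma = \gamma(\gamma_1, \gamma_2) > 0$. This reduces the problem to the class of \emph{prime} tournaments, namely those admitting no non-trivial modular partition.

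For a fixed prime $F$, the next step would be a dichotomy via a tournament analogue of Szemer\'{e}di's regularity lemma. Given an $n$-vertex $F$-free $1$-tournament $T$, partition its vertex set into a bounded number of almost-equal pieces so that almost every pair of pieces is $\varepsilon$-regular, and form the reduced weighted tournament $R$. If $R$ carries a dense pseudo-random piece with orientation densities bounded away from $\pm 1$, a counting lemma embeds many copies of $F$ inside $T$, a contradiction. Otherwise almost every inter-part orientation is close to $\pm 1$, so $R$ is close to transitive; pulling back a long transitive chain in $R$ and recursing inside one large part should yield an acyclic subtournament of size $n^{\gamma}$.

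A complementary angle suggested by this very paper is the chamber picture of Section~\ref{sub:arr}, which identifies acyclic $1$-tournaments with chambers of the braid arrangement and, more generally, puts acyclic $d$-tournaments in bijection with chambers of a higher-dimensional arrangement. One could hope to combine the rigidity of Lemma~\ref{lem:deg_seq} with the arrangement bound used in the proof of Theorem~\ref{thm1} to re-encode $F$-avoidance as a chamber-exclusion condition, and then argue by a pigeonhole on degree sequences that a large chamber must be realized inside $T$, producing a correspondingly large acyclic subtournament.

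The principal obstacle is of course the dichotomy step for arbitrary prime $F$. The conjecture is presently known only for a short list of prime tournaments (for instance, those on at most six vertices, together with the classes obtained from them via substitution), and no structural invariant has yet been isolated which controls the embedding of a general prime $F$ into a pseudo-random host. Identifying such an invariant --- perhaps by exploiting the high-dimensional viewpoint of this paper to translate $F$-freeness into a constraint on the associated $2$-tournament of links --- is where essentially all of the difficulty is concentrated, and where I would expect the plan to stall.
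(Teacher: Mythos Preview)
The statement you are attempting is not a theorem in the paper but an \emph{open conjecture}. The paper presents it in Section~\ref{sec:final} merely as the tournament reformulation (due to Alon, Pach and Solymosi~\cite{solymosi}) of the Erd\H{o}s--Hajnal conjecture, and offers no proof, no proof sketch, and no proposed approach. There is therefore nothing in the paper to compare your attempt against.

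To your credit, you recognize this: your write-up is explicitly a research plan rather than a proof, and you correctly identify the dichotomy step for prime $F$ as the point where the argument is genuinely open. The reduction to prime tournaments via substitution closure and the regularity-based dichotomy template you describe are indeed the standard framework in the literature on Erd\H{o}s--Hajnal. The speculative paragraph invoking the chamber picture of Section~\ref{sub:arr} and Lemma~\ref{lem:deg_seq} is imaginative, but nothing in the paper supports it: those tools are used there only to \emph{count} acyclic $d$-tournaments, and the paper makes no suggestion that its high-dimensional viewpoint bears on the $1$-dimensional Erd\H{o}s--Hajnal problem beyond raising the question of whether high-dimensional analogs of the conjecture might be interesting.

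In short: there is no gap to name because there is no claimed proof on either side; both you and the paper treat the statement as open.
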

It would be interesting to consider high-dimensional analogs of these statements.

In Section~\ref{sub:arr} we study a hyperplane arrangement in $\mathbb R^{{n \choose d}}$ whose cells are in $1:1$ correspondence with the acyclic $n$-vertex $d$-tournaments. It is also interesting to consider the analogous arrangements in $\mathbb C^{{n \choose d}}$  and in particular ask about the fundamental group of these complex arrangements. The situation for $1$-tournaments is well understood. Namely, for $1 \le i < j \le n$ let $H_{ij}$ be the hyperplane defined by the equation $x_i=x_j$. The fundamental group $\pi_1(\mathbb C^n \setminus \underset{i<j}\cup H_{ij})$ is the so-called pure braid group, a mathematical object of great importance and interest. It is an intriguing possibility that there are interesting groups waiting to be discovered for larger $d$. For example, it would be interesting to determine the fundamental group $\pi_1(\mathbb C^{{n\choose 2}} \setminus \underset{i<j<k}\cup H_{ijk})$, where for $1\le i < j < k \le n$ we define $H_{ijk}$ as the hyperplane in $\mathbb C^{{n\choose 2}}$ whose equation is $x_{ij}+x_{jk}=x_{ik}$.

\section{Acknowledgement}

We were not sure for a while which of the many notions of acyclicity would be of greatest interest to study.
We are grateful to Roy Meshulam for helping us take the (hopefully) right decision.

\bibliographystyle{amsplain}

\end{document}